\documentclass{article}

\usepackage{arxiv}

\usepackage{etoolbox}
\robustify\bfseries%
\usepackage{lmodern}
\usepackage[final]{microtype}
\usepackage[utf8]{inputenc}
\usepackage{amsmath, array, mathtools, amssymb, amsthm}
\usepackage{hyperref}
\usepackage[capitalize]{cleveref}
\usepackage[ruled]{algorithm2e}
\usepackage[binary-units]{siunitx}
\usepackage{booktabs}
\usepackage{subcaption}
\usepackage{phd}
\usepackage{xcolor}
\usepackage[round]{natbib}

\usepackage{caption}
\captionsetup[table]{skip=10pt}

\newtheorem{proposition}{Proposition}

\sisetup{
 round-mode = places,
 round-precision = 2,
 table-number-alignment=right,
 table-text-alignment=right,
 group-four-digits
}
\setlength{\tabcolsep}{5pt}




\title{Faster SVM Training via Conjugate SMO}

\author{Alberto Torres-Barr\'{a}n \\
        Instituto de Ciencias Matemáticas (ICMAT) \\
        Centro Superior de Investigaciones Científicas (CSIC) \\
        Nicolás Cabrera, nº13-15 \\
        28049 Madrid, Spain \\
        \texttt{alberto.torres@icmat.es} \And 
        Carlos M. Ala\'{i}z \\
        Departamento de Ingeniería Informática \\
        Universidad Autónoma de Madrid (UAM) \\
        Francisco Tomás y Valiente, 11 \\
        28049 Madrid, Spain \\
        \texttt{carlos.alaiz@uam.es} \And 
        Jos\'{e} R. Dorronsoro \\
        Departamento de Ingeniería Informática and Instituto de Ingeniería del Conocimiento \\
        Universidad Autónoma de Madrid (IIC-UAM)\\
        Francisco Tomás y Valiente, 11 \\
        28049 Madrid, Spain \\
        \texttt{jose.dorronsoro@uam.es}}

\begin{document}
\maketitle

\begin{abstract}%
We propose an improved version of the SMO algorithm for training classification and regression SVMs, based on a Conjugate Descent procedure. This new approach only involves a modest increase on the computational cost of each iteration but, in turn, usually results in a substantial decrease in the number of iterations required to converge to a given precision.
Besides, we prove convergence of the iterates of this new Conjugate SMO as well as a linear rate when the kernel matrix is positive definite.
We have implemented Conjugate SMO within the LIBSVM library and show experimentally that it is faster for many hyper-parameter configurations, being often a better option than second order SMO when performing a grid-search for SVM tuning.
\end{abstract}

\keywords{SVM \and Conjugate Gradient \and SMO}


\section{Introduction}
\label{sec:intro}
Support Vector Machines (SVMs) \citep{cortes1995support} received an enormous attention in the 1990's, not only because of the elegant optimization and risk minimization theories underlying them but also because kernel SVMs provided very powerful classification and regression models that often beat the classical neural networks at that time.
However, at least in their customary Gaussian kernel formulation, they seemed to have currently lost some of their luster, partly because 
their traininig and prediction costs may be too high for the big data problems currently dominating Machine Learning (ML).
This is so because the number of Support Vectors (SVs) underlying any SVM model is usually linear with respect to the sample size.
This implies, first, that for a size $N$ sample, Sequential  Minimum Optimization (SMO), the standard training procedure to solve the dual problem of kernel SVMs, requires $\Omega(N)$ iterations each with $O(N)$ cost \citep{fan2005working}. As a consequence the training cost is, at least, $\Omega (N^2)$ (usually higher), even without counting kernel operations.
Besides, predicting a single new pattern will also have a $O(N)$ cost and, thus, for big sample size problems prediction costs may be too high.
This is the case in many big volume and/or velocity problems of today's big data.

But, on the other hand, big data is, at the end, a moving category, defined by problem size but also by the hardware available at a given time.
In fact advances in hardware imply that problems that 5-10 years ago would be considered big data, are not perceived as such now, since
nowadays computing nodes with RAM sizes of up to 1 TB and above 50 cores are relatively common in research environments.
This means that quite large caches can be used in such machines and, in turn, that sample sizes about $10^5$ patterns, even with relatively large dimensions, can also be dealt with. In addition, model hyper-parameterization can be largely sped up by simple core parallelization.
A well-known such example would be the MNIST problem, often used as a benchmark: it is currently far from the big data league but it is also representative of problems easily solved today but much less so 10 years ago.

Gaussian SVMs are also much more robust than other models when facing feature collinearity, and they are often hard to beat when the number of features is not very high \citep{Rudin_secrets}. The standard algorithm to solve non-linear SVMs, SMO \citep{fan2005working}, is very elegant and powerful, 
with simple, analytic steps and asymptotically linear convergence when the kernel matrix is positive definite \citep{chen2006study}.

Of course, there is a large on-going effort to adapt SVMs to big data settings.
For instance, an important component of the usefulness of Gaussian SVMs is the nonlinear projection of the original patterns in a new space  with a much larger dimension.
However, when the original pattern dimension is large enough, kernels may not be needed and one can work with linear SVMs, either solving the primal or dual problems; see for instance \cite{yuan2012improved}.
Training becomes much faster and large samples can be more easily handled than in the kernel case, but the bias term $b$ has to be dropped in order to get rid of the equality constraint it imposes on the dual problem.
In principle, dropping the offset should hamper the performance of a kernel SVM model, but this may not be always the case \citep{Steinwart_2011}.
On the other hand, unless pattern dimension is substantially high (at least in the thousands), the performance of Gaussian SVMs is usually better than the linear ones.
Here we shall consider Gaussian SVMs for both classification and regression problems, retaining the offset.

In any case, there is a huge literature on speeding up SVM training. 
For instance, early attempts to provide SVMs with online training are the well-known NORMA \citep{Kivinen} and Pegasos \citep{shalev2007pegasos} procedures.
Other recent proposals include decomposing large datasets in appropriate chunks \citep{Thomann}, applying multilevel techniques \citep{Schlag}, accelerating kernel operations using GPUs \citep{Ma}, approximating kernel operations using feature randomization \citep{Rahimi}, applying budget constraints to dual training \citep{Qaadan} or using low rank kernel linearizations \citep{Lan}.
Here we will concentrate, however, on the classical, kernel based dual approach to SVM training, seeking to accelerate the convergence of SMO in a similar way to how standard gradient descent has been accelerated in Deep Neural Network (DNN) training.

DNNs are currently the standard approach to big data problems.
One reason for their success is the skilled exploitation that has been made of several advances in optimization, often based in new ideas inspired by relatively simple techniques in convex optimization.
Gradient Descent (GD) can be analyzed with great precision on a purely convex setting \citep{NesterovIntrodLect} and the same is true of variants to make it faster.
Two well-known ways to improve on GD are the Heavy Ball method, a.k.a. momentum, a slightly coarser version of Conjugate Gradient (CG), 
and Nesterov's acceleration, in itself also a momentum-like method and routinely used for mini-batch gradient descent on DNNs.
%
While the analysis of the application of momentum or Nesterov's acceleration in DNNs can only partially replicate the precision that can be achieved in a pure convex setting, their simplicity makes it very easily to incorporate their basic ideas into other methods. 

From an optimization point of view, Gaussian SVM's dual problem is a quadratic programming problem with a positive definite matrix in most cases, an equality constraint and many simple box inequality ones.
Moreover, SMO can be seen as a projected approximate gradient descent algorithm and its iterations have a very simple and largely analytic structure that is amenable to a precise handling.
It is thus natural to study the possible application of some of the above convex optimization methods to accelerate SMO. We will consider in this work a CG variant tailored to SMO.

We point out that Nesterov's acceleration can also be easily adapted to the SMO algorithm.
However, and as discussed in \cite{torres2016nesterov}, each Nesterov iteration is considerably costlier than a pure SMO one, and the reduction in the number of iterations in Nesterov SMO is not enough to produce actually faster training times.
Our CG variant for SMO has a much smaller overhead, resulting not only in less iterations but also in actually fast training times that are at least competitive and often substantially faster than plain SMO.
This is the core of this work, which greatly expands a preliminary version in \cite{torres2016conjugate} and whose main contributions are:
\begin{enumerate}
\item A detailed proposal of a Conjugate SMO (CSMO) algorithm for SVM classification and regression, with a comprehensive complexity analysis.
\item Proofs of the convergence of CSMO for general kernel matrices and of its linear convergence for positive definite ones.
\item A detailed time comparison between CSMO and second order SMO, based on our implementation of CSMO inside the well-known and excellent LIBSVM library for kernel SVMs (implementation available on GitHub\footnote{\url{https://github.com/albertotb/libsvm_cd}}).
\end{enumerate}
%
The rest of the paper is organized as follows.
We will briefly review SVMs for classification and regression 
and SMO in \cref{sec:smo}.
Our CG-SMO algorithm will be described in \cref{sec:csmo}, which also contains the convergence proofs. Extensive experiments are presented in \cref{sec:exp}.
The paper ends with a discussion and pointers to further work.

\section{SVMs for Classification and Regression}
\label{sec:smo}
\subsection{Primal and Dual Problems}

Here we will work in the general setting introduced in \cite{Lin_smo_conv_wout_assump} that encompasses both
SV classification (SVC) and regression (SVR).
To begin, consider a set of triplets
$\Scal = \left\lbrace (\Xbf_i, y_i, s_i) : i = 1, \ldots, N \right\rbrace$
with $y_i = \pm 1$ and $s_i$ some scalar values,
and the following convex optimization problem:
\begin{equation}
\min_{\wbf, b, \xibf}\; \Pcal (\wbf, b, \xibf) = \frac{1}{2} \| {\wbf} \|^2 + C \sum_i \xi_i
\label{primalGen}
\end{equation}
subject to
$y_i (\wbf^\tr \Xbf_i + b) \geq s_i - \xi_i, \;\;\xi_i \geq 0, \;\; \forall i.$
When $s_i = 1$ this is just SVC, with $C$ a user-specified constant.
Similarly, for a sample
${\cal R} = \{ (\Xbf_i, t_i) : i = 1, \ldots, N \}$,
\cref{primalGen} reduces to $\epsilon$-insensitive SV regression (SVR) if we enlarge the sample to
$\Scal = \{ (X_i, y_i, s_i) : i = 1, \ldots, 2N \}$ taking
$y_i = 1$ and $s_i = t_i - \epsilon$ for $1 \leq i \leq N$,
and having $y_{N+i} = -1$, $s_{N+i} = -t_i - \epsilon$
and $X_{N+i} = X_i$.

Going through the Lagrangian of \eqref{primalGen} one arrives at its dual problem
\begin{align}
\min_{\alphabf}\; \Theta (\alphabf) &= \frac{1}{2} \sum_i \sum_j \alpha_i \alpha_j Q_{ij} - \sum_i \alpha_i s_i \nonumber \\
& =\frac{1}{2} \alphabf^\tr \Qbf \alphabf - \sbf^\tr \alphabf,
\label{dualGen}
\end{align}
where $Q_{ij} = y_i y_j \Xbf_i^\tr \Xbf_j$, and subject to the constraints
$$
0 \leq \alpha_i \leq C , \; 1 \leq i \leq N; \quad \sum_i \alpha_i y_i = 0.
$$
We shall refer to $\sum_i \alpha_i y_i = 0$ as the equality constraint.
Note that in a kernel setting we would replace the inner product $\Xbf_i^\tr \Xbf_j$ with a kernel function $K( \Xbf_i, \Xbf_j )$.

\subsection{Sequential Minimal Optimization}

The $\alphabf$ updates in the Sequential Minimal Optimization, SMO, are 
\begin{equation}\label{smo_updates}
\alphabf^{k+1} = \alphabf^{k} + \rho(y_l\ebf_l - y_u\ebf_u) = \alphabf^{k} + \rho \dbf
\end{equation}
where $\dbf = \dbf_{lu} = y_l\ebf_l - y_u \ebf_u$ is a descent vector, with $\ebf_k$ the vector with all zeros except a $1$ in the $k$-th entry.

We will use the notations $\dbf$ and $\dbf_{lu}$ indistinctly, dropping the subindices when there will be no confusion.
Notice that if  $\alphabf' = \alphabf_k + \rho \dbf$, the equality condition $\ybf^\tr \alphabf' = 0$ clearly holds for any $\rho$.
We have thus to choose $\dbf$ and $\rho$.
Starting with $\dbf$, note that the box constraints imply that the only eligible indices $(l,\, u)$ are those in the sets
\begin{align}
\Ical_\text{L} &\defeq \set{l \given \alpha_l < C,\, y_l = 1 \text{ or } \alpha_l > 0,\,y_l = -1}, 
\label{L_set} \\
\Ical_\text{U} &\defeq \set{u \given \alpha_u < C,\, y_u = -1 \text{ or } \alpha_u > 0,\,y_u = 1}. 
\label{U_set}
\end{align}
In \textbf{first order} SMO, the indices $L$, $U$ for $\dbf = \dbf_{LU}$ are chosen as 
\begin{align}
L & = \argmin_{l \in  \Ical_\text{L}}\; \{ y_l (\Qbf \alphabf^k)_l - y_l s_l \},
\label{L_choice} \\
U & = \argmax_{u \in  \Ical_\text{U}}\; \{ y_u (\Qbf \alphabf^k)_u - y_u s_u \};
\label{U_choice}
\end{align}
observe that then $\dbf_{LU} \cdot \nabla \Theta( \alphabf^k) = y_L (\Qbf \alphabf^k)_L - y_L s_L - y_U (\Qbf \alphabf^k)_U + y_U s_U  < 0$ and $\dbf_{LU}$ is thus a descent direction.

Next, we have to choose $\rho$.
It is easy to see that the unconstrained gain on $\Theta$ going from $\alphabf^k$ to $\alphabf' = \alphabf^{k} + \rho \dbf$ can be written as
\begin{equation} \label{dual_gain}
\Theta(\alphabf^k) - \Theta(\alphabf') =
-\frac{1}{2} \rho^2\, \dbf^\tr \Qbf \dbf - \rho\,\dbf^\tr (\Qbf \alphabf^k - \sbf) = \Psi(\rho),
\end{equation}
which has a maximum provided  $\dbf^\tr \Qbf \dbf > 0$.
Now the unconstrained maximum of \cref{dual_gain} is obtained by solving $\Psi'(\rho) = 0$, which results in
\begin{equation}\label{opt_rho_uncons}
\rho' = -\frac{\dbf^\tr \Qbf\alphabf^k - \dbf^\tr \sbf}{\dbf^\tr \Qbf \dbf}
= - \frac{\dbf^\tr \nabla \Theta(\alphabf^k)}{\dbf^\tr \Qbf \dbf};
\end{equation}
When this is inserted back  in \cref{dual_gain}, the dual gain becomes
\begin{align}
\Theta(\alphabf^{k}) - \Theta(\alphabf') 
& =
\frac{1}{2}\frac{(\dbf^\tr (\Qbf \alphabf^k - \sbf))^2}{\dbf^\tr \Qbf \dbf} \nonumber \\
& = \frac{1}{2}\frac{ (y_L (\Qbf \alphabf^k)_L - y_L s_L - (y_U (\Qbf \alphabf^k)_U - y_U s_U ))^2}{\dbf^\tr \Qbf \dbf}.
\label{dual_gain_2}
\end{align}
To get the optimum constrained step $\rho^*$ we clip $\rho'$ as
\begin{equation}\label{clip_smo}
{\rho}^\opt = \max\set{\min\set{\rho', \,-y_L(C - \alpha_L),\,y_U(C - \alpha_U)}, \, y_L\alpha_L,\, -y_U\alpha_U },
\end{equation}
yielding the final SMO updates
\begin{equation}
\alpha^{k+1}_L = \alpha^k_L + y_L{\rho}^\opt, \quad
\alpha^{k+1}_U = \alpha^k_U - y_U{\rho}^\opt.
\label{LU_update}
\end{equation}

Obviously, the first order $L, U$ choices maximize the numerator in \cref{dual_gain_2};
however, they also influence the denominator $\dbf_{LU}^\tr \Qbf \dbf_{LU}$.
The {\bf second order} SMO updates exploit this by choosing $L$ as in \cref{LU_update} but, once fixed, $U$ is selected as
\begin{equation}
U = \argmax_{u \in  \Ical_\text{U}} \left\{
\frac{ (y_L (\Qbf \alphabf^k)_L - y_L s_L - (y_u (\Qbf \alphabf^k)_u - y_u s_u ))^2}{\dbf_{Lu}^\tr \Qbf \dbf_{Lu}}
\right\}.
\label{U_choice_2}
\end{equation}
The resulting $d_{LU}$ gives again a descent direction but now with a larger unclipped gain on $\Theta$.
In other words, $\dbf_{LU}$ is a simple proxy of the full gradient $\nabla \Theta(\alphabf^k)$ that yields a greater gain.
We shall use later on the notations $\dbf_{LU} = \dbf(\alphabf^k) = \dbf^k$ and also 
$$\Delta(\alphabf^k) = y_U (\Qbf \alphabf^k)_U - y_U s_U - \left(y_L (\Qbf \alphabf^k)_L - y_L s_L \right) =- \nabla \Theta(\alphabf) \cdot \dbf_{LU}.$$
%

A consequence of the Karush-Kuhn-Tucker (KKT) conditions for the SVM primal and dual problems is that $\alphabf$ is a dual optimum if and only if $\Delta(\alphabf) \leq 0$.
Hence, if $\Delta(\alphabf) > 0$, there is at least one pair $l, u$ that violates this minimum condition;
in particular, the $L, U$ chosen for the first order SMO iterations are called a maximal violating pair.  
The SMO iterates continue until some stopping condition is met; the usual choice is to have
$\Delta (\alphabf^k) < \epsilon_{\text{KKT}}$ for some pre-selected KKT tolerance $\epsilon_{\text{KKT}}$. The whole procedure is summarized in Algorithm \ref{alg:smo}.

\begin{algorithm}[!t]
\DontPrintSemicolon
\SetKwInOut{Input}{Input}
\Input{$\alphabf=0 \in \Rbb^d$ and $C$}
\While{stopping condition not met} {
  Select working set $(L,U)$ using \eqref{L_choice}, \eqref{U_choice_2}\;
  Compute unconstrained stepsize $\rho$ as in \cref{opt_rho_uncons} \;
  Clip the stepsize if necessary as in \cref{clip_smo} \;
  $\alpha_L \leftarrow \alpha_L + y_L\hat{\rho}^\opt$ \;
  $\alpha_U \leftarrow \alpha_U - y_U\hat{\rho}^\opt$ \;
  Update the gradient at $\alpha$ \;
}
\caption{Sequential Minimal Optimization (SMO)}\label{alg:smo}
\end{algorithm}

\subsection{Cost and Convergence of SMO}

The cost per iteration of SMO is determined by the choice of $L$ and $U$, and the update of the gradient $\nabla \Theta(\alphabf^k)$.
Selecting $U$ requires $2N$ products, with $N$ being the sample size. To compute the gradient efficiently just note that
\begin{align}
\nabla \Theta(\alphabf + \rho \dbf)
 &= \Qbf(\alphabf + \rho \dbf) - \sbf = \nabla \Theta(\alphabf) + \rho \Qbf \dbf \notag \\
 &= \nabla \Theta(\alphabf) + \rho(y_L \Qbf_L - y_U \Qbf_U) \label{smo_grad_update},
\end{align}
where $\Qbf_k$ is the $k$-th column of the matrix $\Qbf$. Thus, a vector with the current gradient is maintained during the optimization and updated with a cost of $N$ products. 
In total, $3N$ floating point products are needed for each SMO update.

For a general positive semidefinite kernel matrix $\Qbf$, the dual problem \eqref{dualGen} does not have a unique solution, but the sequence $\alphabf^k$ of either first or second order SMO iterates has a subsequence that converges to a dual minimum $\alphabf^*$; see \cite{fan2005working} and \cite{chen2006study} for details.
However the primal problem \eqref{primalGen} has a unique minimum $\wbf^*$ and, when formulated in the reproducible Hilbert kernel space (RKHS) $H$ induced by the kernel $K$, the sequence
$\wbf^k = \sum_p \alpha^k_p y_p \Phi(\Xbf_p) \in H$ derived from the entire SMO iterate sequence converges to $\wbf^*$, 
where $\Phi(\Xbf)$ denotes the mapping induced by $K$ of the initial $\Xbf$ patterns into the RKHS $H$; see \cite{LopezDorronConvSMO}.

When $\Qbf$ is positive definite, there is a unique dual minimum $\alphabf^*$, the entire SMO sequence $\alphabf^k$ tends to $\alphabf^*$ and linear covergence of SMO has been proved under different assumptions (see for instance \cite{list2007general}).
We shall consider here the non-degeneracy condition in \cite{fan2005working}.
Let
\begin{equation}
{\cal H} = \{ q : y_q (\wbf^* \cdot \Xbf_q + b^*) = s_q \};
\label{h_def}
\end{equation}
%
then $s_q = y_q (\wbf^* \cdot \Xbf_q + b^*) = (\Qbf \alphabf^*)_q + y_q b^*$, i.e., 
$y_q s_q = y_q (\Qbf \alphabf^*)_q +  b^*$ for all $q \in {\cal H}$ and, therefore,
\begin{align}
\Delta(\alphabf^k) &= y_U (\Qbf \alphabf^k)_U - y_L (\Qbf \alphabf^k)_L - (y_U s_U - y_L s_L) \nonumber \\
&= y_U (\Qbf \alphabf^k)_U - y_L (\Qbf \alphabf^k)_L - \left(y_U (\Qbf \alphabf^*)_U - y_L (\Qbf \alphabf^*)_L \right) \nonumber \\
&= -(\alphabf^k - \alphabf^*) \cdot \Qbf \dbf_{LU} .
\label{delta_2}
\end{align}
The non-degeneracy condition is $q \in \cal{H}$ if and only if $0 < \alphabf^*_q < C$.
Then, it is shown in \cite{chen2006study}, Theorem 6, that for the first order SMO iterates, there is a $K >0$ and $c$, $0 < c < 1$, such that for all $k \geq K$, 
$$\Theta(\alphabf^{k+1}) - \Theta(\alphabf^\opt) \leq c(\Theta(\alphabf^k) - \Theta(\alphabf^\opt)).$$
We point out that Theorem 6 in \cite{chen2006study} considers more general SMO iterates but not second order ones.

\section{Conjugate SMO}
\label{sec:csmo}
\subsection{Conjugate Directions for SMO}

Recall that SMO updates are of the form
$\alphabf^{k+1} = \alphabf^{k} + \rho_k \dbf^k$
where $\dbf^k = y_L\ebf_L - y_U \ebf_U$ and $(L,U)$ are the indices selected by the SMO procedure described in \cref{sec:smo}. 

Following \cite{torres2017phd}, we replace the descent direction $\dbf^k$ by an appropriate conjugate direction $\pbf^k$, $\alphabf^{k+1} = \alphabf^k + \rho_k \pbf^k$ with
\begin{equation}\label{cdsmo_p_dir}
\pbf^k = \dbf^k + \gamma_k \pbf^{k-1}
\end{equation}
and where $\dbf^k$ is chosen as in standard first or second order SMO at $\alphabf^k$.
If the preceding $\pbf^{k-1}$ verifies $\sum{y_i p^{k-1}_i} = 0$, then
$$\sum{y_i p^k_i} = \sum{y_i (d^k_i + \gamma_k p^{k-1}_i)} = 0$$
and the new $\alphabf^k$ automatically verifies the linear constraint, i.e.,
$$\sum{y_i \alpha^{k+1}_i} = \sum{y_i \alpha^k_i} + \rho_k \sum{y_i p^k_i} = 0.$$
Now, assume for the time being that $\gamma_k$ and the conjugate direction $\pbf^k$ have been chosen; we then find the unconstrained ${\rho}_k$ factor by minimizing $\Theta$ along $\pbf^k$. 
Let $\gbf^k = \nabla \Theta(\alphabf^k) = \Qbf \alphabf^k - \sbf$ be the gradient of the SVM objective function at $\alphabf^k$; then we have
\begin{align*}
\frac{\partial}{\partial {\rho}} \Theta (\alphabf^k + {\rho} \pbf^k)
&= {\rho} \pbf^k \cdot \Qbf \pbf^k + \pbf^k \Qbf \alphabf^k - \pbf^k \cdot \sbf \\
&= {\rho} \pbf^k \cdot \Qbf \pbf^k + \pbf^k (\Qbf \alphabf^k - \sbf) \\
&= {\rho}_k\pbf^k \cdot \Qbf \pbf^k + \gbf^k \cdot \pbf^k.
\end{align*}
Writing $\pbf^k$ in terms of $\pbf^{k-1}$, 
we solve $\frac{\partial}{\partial {\rho}} \Theta (\alphabf^k + {\rho} \pbf^k) = 0$ by taking
\begin{equation}\label{cdsmo_rho_opt1}
{\rho}_k^\opt = \frac{ -\gbf^k \cdot \pbf^k}{\pbf^k \cdot \Qbf \pbf^k} = \frac{ -\gbf^k \cdot (\dbf^k + \gamma_k\cdot \pbf^{k-1})}{\pbf^k \cdot \Qbf \pbf^k} = \frac{ -\gbf^k \cdot \dbf^k - \gamma_k \gbf^k \cdot \pbf^{k-1}}{\pbf^k \cdot \Qbf \pbf^k}.
\end{equation}
%
%
If the previous line minimization along $\pbf^{k-1}$ has been unclipped, i.e. we have $\alphabf^k = \alphabf^{k-1} + \rho_{k-1} \pbf^{k-1}$, then $\alphabf^k$ is the optimum of $\Theta$ along the line $\alphabf^{k-1} + \rho \pbf^{k-1}$ and hence $\nabla \Theta(\alphabf^k)$ and $\pbf^{k-1}$ are orthogonal, i.e., the following condition must hold:
\begin{equation} \label{1srtOrthog}
\gbf^k \cdot \pbf^{k-1} = 0.
\end{equation}
We will call \cref{1srtOrthog} the {\bf first orthogonality condition}.
As a consequence,
$$\gbf^k \cdot \pbf^k = \gbf^k \cdot \dbf^k + \gamma_k \gbf^k \cdot \pbf^{k-1} = \gbf^k \cdot \dbf^k  < 0, $$
i.e. $\pbf^k$ is a descent direction, since so is $\dbf^k$. 
Besides, \cref{cdsmo_rho_opt1} simplifies to
\begin{equation}\label{cdsmo_rho_opt2}
{\rho}_k^\opt = -\frac{\gbf^k \cdot \dbf^k}{\pbf^k \cdot \Qbf \pbf^k},
\end{equation}
and is easy to see that the unconstrained gain in $\Theta$ is now 
%
\begin{equation}\label{conjugate_dual_gain}
\Theta(\alphabf^k) - \Theta(\alphabf') = \frac{1}{2} \frac{ (\gbf^k \cdot \dbf^k)^2}{\pbf^k \cdot \Qbf \pbf^k}.
\end{equation}
Just as before, our choice of $\dbf^k$ may maximize the numerator but now we can further maximize on this gain by choosing $\gamma_k$ to minimize the denominator
${\pbf^k \cdot \Qbf \pbf^k}$. Writing it as a function of $\gamma$, we have
\begin{align*}
\phi(\gamma) &=(\dbf^k + \gamma \pbf^{k-1}) \cdot \Qbf (\dbf^k + \gamma \pbf^{k-1}),\\
\phi'(\gamma) &= 2(\dbf^k \cdot \Qbf \pbf^{k-1} + \gamma \pbf^{k-1} \cdot \Qbf \pbf^{k-1});
\end{align*}
thus, solving $\phi'(\gamma)=0$ yields 
\begin{equation} \label{cdsmo_gamma_opt}
\gamma_k^\opt = - \frac{\dbf^k \cdot \Qbf \pbf^{k-1}} {\pbf^{k-1} \cdot \Qbf \pbf^{k-1}}.
\end{equation}
Now it is easy to see that this choice results in
%
$\pbf^k \cdot \Qbf \pbf^{k-1} = 0$;
%
we will call this equation
the \textbf{second orthogonality condition}. 
Plugging this $\gamma_k^\opt$ estimate into the $\pbf^k$ vector in the denominator of \cref{conjugate_dual_gain}, it becomes
\begin{equation*}
{\pbf^k \cdot \Qbf \pbf^k} = \dbf^k \cdot \Qbf \dbf^k - \frac{ (\dbf^k \cdot \Qbf \pbf^{k-1})^2} {\pbf^{k-1} \cdot \Qbf \pbf^{k-1}} = 
\|\dbf^k\|_\Qbf^2 \left(1 - \frac{(\dbf^k \cdot \Qbf \pbf^{k-1})^2}{ \|\dbf^k\|_\Qbf^2 \|\pbf^{k-1}\|_\Qbf^2} \right)    
\end{equation*}
%
where we use the notation $\|u\|_\Qbf^2 =  u \cdot \Qbf u$.
Therefore, we can write the unclipped gain in \eqref{conjugate_dual_gain} as
\begin{equation}\label{conjugate_dual_gain_2}
\Theta(\alphabf^k) - \Theta(\alphabf') = \frac{1}{2} \frac{ (\gbf^k \cdot \dbf^k)^2}
{ \|\dbf^k\|_\Qbf^2 \left(1 - \frac{(\dbf^k \cdot \Qbf \pbf^{k-1})^2}{ \|\dbf^k\|_\Qbf^2 \|\pbf^{k-1}\|_\Qbf^2} \right)}.
\end{equation}
Since $\dbf^k \cdot \Qbf \pbf^{k-1} \leq \|\dbf^k\|_\Qbf \|\pbf^{k-1}\|_\Qbf$, 
it follows that the unclipped CSMO gain in \cref{conjugate_dual_gain_2} is always larger than the unclipped SMO gain in \cref{dual_gain_2} at $\alphabf^k$.

We can summarize now our conjugate SMO updates. If the iteration ending in $\alphabf^k$ along $\pbf^{k-1}$ has not been clipped, we:
\begin{enumerate}
\item update $\pbf^k$ from $\pbf^{k-1}$ using \cref{cdsmo_p_dir} and then $\gamma_k$ using \cref{cdsmo_gamma_opt},
\item compute ${\rho}_k$ and $\alphabf'$ using \cref{cdsmo_rho_opt2} 
and, finally,
\item check whether $\alphabf'$ satisfies the box constraints $$0 \leq \alphabf'_i \leq C$$ and, if not, clip ${\rho}_k$ accordingly to get the final $\rho_k$ and to arrive at 
$\alphabf^{k+1}$.
\end{enumerate}
%
%
We point out that having to clip the $\alphabf^{k+1}$ update implies that we have hit the boundary of the box region.
When this happens, we will simply reset $\pbf^k$ to 0 after the update, as keeping the current conjugate direction may lead to further boundary hits. 
We will then have $\pbf^{k+1} = \dbf^{k+1}$ at the new iteration, which becomes a standard SMO update.

\subsection{Efficient Conjugate SMO}

At first sight, the possible advantages of working with the conjugate directions may be offset by their cost, higher than that of the  SMO iterations.
However, working with the SMO descent directions $\dbf_{L U} = y_L \ebf_L - y_U \ebf_U$ greatly simplifies these computations. For this, we will keep an auxiliary vector $\qbf = \Qbf \pbf$ and constant $\delta = \pbf \cdot \Qbf \pbf$ that we will update at each iteration and use them to simplify the computation of the other elements as follows:
\begin{align}
\begin{split}
\gamma_k &= - \frac{\dbf^k \cdot \Qbf \pbf^{k-1}} {\pbf^{k-1} \cdot \Qbf \pbf^{k-1}} = - \frac{\dbf^k \cdot \qbf^{k-1}} {\delta^{k-1}}\\
             &= \frac{y_U \qbf^{k-1}_U - y_L \qbf^{k-1}_L} {\delta^{k-1}},
\end{split}  \label{cdsmo_gamma_k} \\[6pt]
\begin{split}
\qbf^k  &=  \Qbf (\dbf^k + \gamma_k \pbf^{k-1}) \\
        &= y_L \Qbf_L  - y_U \Qbf_U  + \gamma_k \qbf^{k-1},
\end{split} \label{cdsmo_q_k} \\[6pt]
\begin{split}
\delta_k &=  \pbf^k \cdot \Qbf \pbf^k = \dbf^k \cdot \Qbf \pbf^k = \dbf^k \cdot \qbf^k  \\
         &= y_L \qbf^k_L  - y_U \qbf^k_U,
\end{split} \label{cdsmo_delta_k} \\[6pt]
{\rho}_k  &=  \frac{ -\gbf^k \cdot \dbf^k}{\delta_k} = \frac{ y_{U} \gbf^k_{U}- y_{L} \gbf^k_{L} }{\delta_k}, \label{cdsmo_rho_k}
\end{align}
where $\Qbf_j$ denotes $\Qbf$'s $j$th column. Note that using \cref{cdsmo_q_k} the gradient can be efficiently updated as
\begin{equation}\label{cdsmo_g_k}
\gbf^{k+1} = \nabla \Theta(\alphabf^k + \rho_k \pbf^k) = \Qbf\alphabf^k + \rho_k\Qbf \pbf^k - \sbf = \gbf^k + \rho_k \qbf^k.
\end{equation}

\begin{algorithm}[t]
\DontPrintSemicolon
\LinesNumbered
\SetKwInput{Input}{Input}
\SetKwInOut{Initialize}{Initialize}
\SetKw{KwGoTo}{go to}
\Initialize{$\alphabf^0=\gbf^0=\zeros, \pbf^{-1}=\qbf^{-1}=\zeros$, $\delta^{-1} = 1$}
\While{stopping condition not met} {
   Select working set $(L,U)$ using \eqref{L_choice}, \eqref{U_choice_2}\; \label{cdsmo_selectLU}
   Compute the kernel matrix columns $\Qbf_L$ and $\Qbf_U$, if not previously cached\;
   $\gamma_k = (y_U \qbf^{k-1}_U - y_L \qbf^{k-1}_L)/\delta^{k-1}$\;
   $\pbf^k = \dbf^k + \gamma_k \pbf^{k-1}$\; \label{cdsmo_updateP}
   $\qbf^k = y_L \Qbf_L  - y_U \Qbf_U  + \gamma_k \qbf^{k-1}$\; \label{cdsmo_updateQ}
   $\delta^k = y_L \qbf^k_L  - y_U \qbf^k_U$\;
   Compute ${\rho}_k$ as in \eqref{cdsmo_rho_k} and clip it if needed \; \label{cdsmo_stepAndClip}
   $\alphabf^{k+1} = \alphabf^k + \rho_k \pbf^k$\; \label{cdsmo_updateA}
   $\gbf^{k+1} = \gbf^k + \rho_k \qbf^k$\; \label{cdsmo_updateG}
   \If{$\tilde{\rho}_k$ was clipped}{
     $\qbf^k = \pbf^k = \zeros$\;
     $\delta_k = 1$\;
   }
}
\caption{Conjugate SMO (CSMO)}\label{alg:cdsmo}
\end{algorithm}

The pseudocode for the conjugate version of SMO is shown in Algorithm \ref{alg:cdsmo}. Regarding the clipping of ${\rho}_k$, we need a value that ensures
$0 \leq \alpha^k_j + \rho_k p^k_j \leq C$  for all $j$.
Let us define the index sets
$$\Pcal^k_+  = \set{i \given p^k_i > 0} \quad \text{and} \quad \Pcal^k_-  = \set{i \given p^k_i < 0}.$$
Thus, to make sure that the following inequalities hold
\begin{alignat*}{3}
0 < &\; \rho_k \leq &&\; \frac{C-\alpha^k_i}{p^k_i} \quad &&\text{if }  i \in \Pcal^k_+, \\
0 < &\; \rho_k \leq &&\; -\frac{\alpha^k_i}{p^k_i} \quad  &&\text{if }  i \in \Pcal^k_-,
\end{alignat*}
we take the new $\rho_k$ as $\min \set{{\rho}_k,\, \rho^+,\, \rho^-}$, where
$$\rho^+ = \min \set{\frac{C-\alpha^k_i}{p^k_i} \given i \in \Pcal^k_+ }, \;\;
\rho^- = \min \set{-\frac{\alpha^k_i}{p^k_i} \given i \in \Pcal^k_- }.$$

We finish this section discussing the computational cost of the conjugate SMO updates. 
If $N_p$ and $N_q$ denote the number of non-zero components of $\pbf$ and $\qbf$ respectively, the cost in products of each iteration is
\begin{enumerate}
\item $2N$ products  in line \ref{cdsmo_selectLU} of Algorithm \ref{alg:cdsmo} when selecting $L$ and $U$;
\item $N_p$ products to update $\pbf$ in line \ref{cdsmo_updateP}, to update $\alphabf$ in line \ref{cdsmo_updateA}, and to compute a clipped $\rho_k$ in line \ref{cdsmo_stepAndClip};
\item $N_q$ products  to update $\qbf$ in line \ref{cdsmo_updateQ} and
\item $N$ products to update the gradient $\gbf$ in line \ref{cdsmo_updateG}.
\end{enumerate}
We expect $N_q \simeq N$ but $N_p$ should coincide with the number of non-zero components in $\alphabf$; this number should be $\ll N$ and, similarly,
we should have $N_p \ll N$. Thus a conjugate iteration should theoretically add a cost of
$$3N + N_q + 3 N_p \simeq 3N + N_q \simeq 4N,$$
in contrast with $3N$ for a standard SMO iteration. Therefore, CSMO should lead to a faster training if the number of SMO iterations is more than $4/3$ the number of CSMO ones. In any case, note that the cost of the iterations in which a non-cached kernel column matrix has to be computed will require a much larger $N \times d$ number of products when working with patterns in a $d$-dimensional space or even more in a kernel setting. Thus, in the starting iterations the cost of the SMO and conjugate SMO would be dominated by the much larger cost of computing the required $\Qbf$ columns.

\subsection{Convergence of Conjugate SMO}
We show first the convergence of CSMO.
\begin{proposition}
There is a subsequence of CSMO iterates that converge to a dual minimum.
\end{proposition}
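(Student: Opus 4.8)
The plan is to follow the classical subsequential-convergence argument for SMO (as in \cite{fan2005working,chen2006study}), adapting it to the conjugate iterates. First I would observe that the feasible set $\set{\alphabf \given 0 \le \alpha_i \le C,\ \ybf^\tr \alphabf = 0}$ is compact, and that every CSMO iterate lies in it because the step is clipped to respect the box constraints while $\pbf^k$ preserves the equality constraint; hence $\set{\alphabf^k}$ has a convergent subsequence. Next, since each $\pbf^k$ is a descent direction ($\gbf^k \cdot \pbf^k = \gbf^k \cdot \dbf^k < 0$ by the first orthogonality condition) and $\rho_k \in (0, \rho_k^\opt]$ after clipping, the objective is non-increasing; being continuous on a compact set it is bounded below, so $\Theta(\alphabf^k)$ converges and the per-step gains $g_k = \Theta(\alphabf^k) - \Theta(\alphabf^{k+1}) \ge 0$ tend to $0$ and satisfy $\sum_k g_k < \infty$.

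The argument then splits according to how often clipping occurs. If only finitely many iterations are unclipped, i.e.\ from some index on every step is clipped, then, because we reset $\pbf^k = \zeros$, $\qbf^k = \zeros$, $\delta_k = 1$ after every clip, the next direction is $\gamma_{k+1} = 0$ and $\pbf^{k+1} = \dbf^{k+1}$: the iteration reduces to an ordinary (clipped) second-order SMO step. From that index on CSMO is therefore \emph{identical} to standard second-order SMO, whose subsequential convergence to a dual minimum is exactly the result recalled in \cref{sec:smo} (see \cite{fan2005working,chen2006study}), and we are done. It remains to treat the case in which infinitely many iterations are unclipped.

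For an unclipped iteration the actual gain equals the unconstrained conjugate gain \eqref{conjugate_dual_gain}, which by the comparison drawn just after \eqref{conjugate_dual_gain_2} dominates the unclipped SMO gain \eqref{dual_gain_2}; using $\gbf^k \cdot \dbf^k = -\Delta(\alphabf^k)$ and $\dbf^k \cdot \Qbf \dbf^k \le 4\max_i Q_{ii}$ (the latter from $|Q_{LU}| \le \sqrt{Q_{LL} Q_{UU}}$), I get
\begin{equation*}
g_k = \frac{1}{2}\frac{(\gbf^k \cdot \dbf^k)^2}{\pbf^k \cdot \Qbf \pbf^k} \ge \frac{1}{2}\frac{(\gbf^k \cdot \dbf^k)^2}{\dbf^k \cdot \Qbf \dbf^k} = \frac{\Delta(\alphabf^k)^2}{2\, \dbf^k \cdot \Qbf \dbf^k} \ge \frac{\Delta(\alphabf^k)^2}{8 \max_i Q_{ii}}.
\end{equation*}
Since $\sum_k g_k < \infty$, summing over the unclipped iterations forces $\Delta(\alphabf^k) \to 0$ along that subsequence. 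Extracting a further convergent sub-subsequence $\alphabf^{k_j} \to \bar{\alphabf}$, I then have $\Delta(\alphabf^{k_j}) \to 0$.

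Finally I would show $\Delta(\bar{\alphabf}) \le 0$, which by the KKT characterization in \cref{sec:smo} certifies $\bar{\alphabf}$ as a dual minimum. The main obstacle is that $\Delta$ is not continuous, because the index sets $\Ical_\text{L}(\alphabf)$, $\Ical_\text{U}(\alphabf)$ can lose members in the limit. I would dispatch this with the standard lower-semicontinuity observation: if $\Delta(\bar{\alphabf}) > 0$, there would be $\bar l \in \Ical_\text{L}(\bar{\alphabf})$ and $\bar u \in \Ical_\text{U}(\bar{\alphabf})$ realizing the gap; since membership in these sets is defined by \emph{strict} box inequalities, it persists under small perturbations, so $\bar l \in \Ical_\text{L}(\alphabf^{k_j})$ and $\bar u \in \Ical_\text{U}(\alphabf^{k_j})$ for large $j$. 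Continuity of the gradient then gives $\Delta(\alphabf^{k_j}) \ge y_{\bar u} \nabla_{\bar u}\Theta(\alphabf^{k_j}) - y_{\bar l} \nabla_{\bar l}\Theta(\alphabf^{k_j}) \to \Delta(\bar{\alphabf}) > 0$, contradicting $\Delta(\alphabf^{k_j}) \to 0$. Hence $\Delta(\bar{\alphabf}) \le 0$ and $\bar{\alphabf}$ is a dual minimum, completing the proof.
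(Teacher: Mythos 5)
Your proof is correct and its core is the same as the paper's: bound the unclipped conjugate gain below by a constant times $\Delta(\alphabf^k)^2$, use monotonicity and boundedness of $\Theta$ to force $\Delta\to 0$ along unclipped iterations, extract a convergent subsequence, and rule out $\Delta(\overline{\alphabf})>0$ by persistence of eligibility (strict box inequalities) plus continuity of the gradient — the paper outsources this last step to Proposition~4 of \cite{LopezDorronConvSMO}, you spell it out, but it is the same argument. The one place you genuinely diverge is the treatment of clipping. The paper invokes the result of \cite{LopezDorronConvSMO} that there is a maximum number $M'$ of consecutive clipped SMO iterations, so that (after the reset to a plain SMO step) at most $M'+1$ consecutive CSMO iterations can be clipped and an infinite unclipped subsequence is automatic. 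You instead split into cases and observe that if all iterations are eventually clipped then, because of the reset, CSMO coincides from some index on with standard second-order SMO started from a feasible point, so the known SMO convergence results apply. This is a legitimate and self-contained alternative: it avoids importing the bound on consecutive clipped iterations, at the price of treating as a live case something the paper shows cannot occur.

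One imprecision you should repair in the write-up: in the chain of inequalities you assert $\gbf^k\cdot\dbf^k=-\Delta(\alphabf^k)$. That identity holds when $\dbf^k$ is the \emph{first-order} direction of \eqref{L_choice}--\eqref{U_choice}, but Algorithm~\ref{alg:cdsmo} selects the working set with the second-order rule \eqref{U_choice_2}, for which $-\gbf^k\cdot\dbf^k$ can be strictly smaller than $\Delta(\alphabf^k)$, so your middle equality fails and the naive inequality would point the wrong way. The fix is exactly the paper's first display in the proof: since the second-order $U$ maximizes the ratio $(\gbf^k\cdot\dbf_{Lu})^2/\|\dbf_{Lu}\|_\Qbf^2$ over eligible $u$, and the first-order $U$ is one such candidate, the gain is still bounded below by $\Delta(\alphabf^k)^2/(2\|\dbf_{LU}\|_\Qbf^2)\geq \Delta(\alphabf^k)^2/(8\max_i Q_{ii})$, after which your argument proceeds unchanged. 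This is a presentational slip rather than a gap, since the quantity you actually need to send to zero is the first-order $\Delta$, and the corrected bound delivers exactly that.
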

\begin{proof}
We adapt the argument for SMO given in \cite{LopezDorronConvSMO}.
Let us denote by $U$ and $\tilde{U}$ the $U$  choices of first and second order SMO (the $L$ choices coincide).
First, observe that since the second order gain is larger than the first order one, we must have 

$$\frac{ (\gbf^k \cdot \dbf^k_{L \tilde{U}})^2} { \|\dbf^k_{L \tilde{U}}\|_\Qbf^2 } \geq \frac{ (\gbf^k \cdot \dbf^k_{L U})^2} { \|\dbf^k_{L U}\|_\Qbf^2 } = 
\frac{ \Delta(\alphabf^k)^2} { \|\dbf^k_{L {U}}\|_\Qbf^2 } .$$
As a consequence, for an unclipped CSMO iteration it follows from \cref{conjugate_dual_gain_2} that 
\begin{equation}\label{conjugate_dual_gain_3}
\Theta(\alphabf^k) - \Theta(\alphabf^{k+1}) 
\geq \frac{1}{2} \frac{ (\gbf^k \cdot \dbf^k_{L \tilde{U}})^2} { \|\dbf^k_{L \tilde{U}}\|_\Qbf^2 } 
\geq \frac{1}{2} \frac{ \Delta(\alphabf_k)^2} { \|\dbf^k_{L {U}}\|_\Qbf^2 } 
\geq \kappa \Delta(\alphabf_k)^2,
\end{equation}
with $1 / \kappa$ an upper bound of ${2 \|\dbf_{L U}^k\|_\Qbf^2}$.
Now, it is shown in \cite{LopezDorronConvSMO} that there is a maximum number $M'$ of consecutive clipped 
SMO iterations. 
Since we restart CSMO with a plain SMO iteration after a clipped CSMO one, a sequence of $m$ consecutive clipped CSMO iterations is made of a single CSMO one and $m-1$ clipped SMO iterations afterwards.
Thus, there is a maximum number $M = M'+1$ of consecutive clipped CSMO iterations.
In particular, there must be a subsequence $k_j$ of unclipped CSMO iterations and, since the $\Theta(\alphabf^k)$ sequence is decreasing and bounded from below, it follows from \eqref{conjugate_dual_gain_3} that $\Delta_{k_j} = \Delta(\alphabf^{k_j}) \rightarrow 0$.

Now, since the subsequence $\alphabf^{k_j}$ is bounded, it contains another subsequence, which we will also denote as $\alphabf^{k_j}$, which converges to a feasible $\overline{\alphabf}$.
Assume $\overline{\alphabf}$ is not a dual minimum, i.e., $\overline{\Delta} = \Delta(\overline{\alphabf}) > 0$, and let $L,U$ be a most violating first order SMO pair for $\overline{\alphabf}$;
then, as shown in~\cite{LopezDorronConvSMO}, Proposition 4, there is a $K_0$ such that for all $k_j \geq K_0$, $L, U$ is an eligible pair for $\alphabf^{k_j}$ and, also,
\begin{equation}
| y^q \nabla \Theta (\alphabf^{k_j})_q - y^q \nabla \Theta (\overline{\alphabf})_q| \leq \frac{\overline{\Delta}}{4}.
\label{unif_conv}
\end{equation}
%
We now have
\begin{align*}
\Delta_{k_j} & \geq y_U \nabla \Theta (\alphabf^{k_j})_U - y_L \nabla \Theta (\alphabf^{k_j})_L \\
             & \geq y_U \nabla \Theta (\overline{\alphabf})_U - y_L \nabla \Theta (\overline{\alphabf})_L - 2 \frac{\overline{\Delta}}{4} = \Delta(\overline{\alphabf}) - \frac{\overline{\Delta}}{2} = \frac{\overline{\Delta}}{2},
\end{align*}
where the first inequality follows from the eligibility of $L, U$ for $\alphabf^{k_j}$ and the second from \eqref{unif_conv}.
But $\Delta_{k_j} \geq \overline{\Delta}/2 >0$ contradicts $\Delta_{k_j} \rightarrow 0$.
Thus, $\Delta(\overline{\alphabf}) \leq 0$ and the sequence $\alphabf^{k_j}$ converges to the dual minimum $\overline{\alphabf}$.
\end{proof}

Observe that the above proof works when using in CSMO either first or second order SMO updates. 
An easy consequence of this is that $\Theta(\alphabf^{k_j}) \rightarrow \Theta(\overline{\alphabf})$ and, thus, the entire sequence $\Theta(\alphabf^{k})$ converges to the dual minimum.
Moreover, and as pointed out for SMO, the sequence $\wbf^k = \sum_q \alphabf^k_q y_q \Phi(\Xbf_q)$ converges to the unique primal minimum $\wbf^*$.
We will show next linear convergence of first order CSMO updates  assuming $\Qbf$ to be positive definite and the same non-degeneracy condition used for linear convergence in SMO.
\begin{proposition}
Assume $\Qbf$ to be positive definite and the non-degeneracy condition that $q \in \cal{H}$ if and only if $0 < \alphabf^*_q < C$, with $\alphabf^*$ the unique dual minimum and ${\cal H}$ defined in \eqref{h_def}.
Then, when using first order SMO updates, the entire CSMO sequence $\alphabf^k$ converges linearly to $\alphabf^*$.
\label{csmo_lin_conv_full}
\end{proposition}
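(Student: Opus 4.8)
The plan is to reduce linear convergence of the iterates to geometric decay of the objective gap $g_k \defeq \Theta(\alphabf^k) - \Theta(\alphabf^\opt)$. Since $\Qbf$ is positive definite, $\Theta$ is $\lambda_{\min}(\Qbf)$-strongly convex over the feasible polytope and $\alphabf^\opt$ is its minimizer, so $\tfrac{1}{2}\lambda_{\min}(\Qbf)\,\|\alphabf^k - \alphabf^\opt\|^2 \leq g_k$; hence geometric decay of $g_k$ immediately forces $\|\alphabf^k - \alphabf^\opt\|$ to decay geometrically as well. The same strong-convexity bound, combined with the fact established right after Proposition~1 that $\Theta(\alphabf^k) \to \Theta(\alphabf^\opt)$, already guarantees that the \emph{entire} sequence $\alphabf^k$ converges to $\alphabf^\opt$, which is what lets us eventually invoke near-optimum estimates.

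Next I would isolate the two ingredients underlying Chen's first-order SMO rate. (i) A per-step gain bound: for an unclipped first-order CSMO step, the CSMO gain \eqref{conjugate_dual_gain_2} dominates the first-order SMO gain at the same point (the inequality stated just after \eqref{conjugate_dual_gain_2}), so exactly as in \eqref{conjugate_dual_gain_3} one gets $\Theta(\alphabf^k)-\Theta(\alphabf^{k+1}) \geq \tfrac{1}{2}\Delta(\alphabf^k)^2/\|\dbf^k_{L U}\|_\Qbf^2 \geq \kappa\,\Delta(\alphabf^k)^2$, with $\kappa>0$ a uniform lower bound arising because $\|\dbf_{L U}\|_\Qbf^2$ takes finitely many positive values. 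A plain SMO step taken after a clip obeys the same bound, being itself a first-order SMO step. (ii) A point-wise optimality-gap bound: under positive definiteness and non-degeneracy there exist $K$ and $c_1>0$ with $g_k \leq c_1\,\Delta(\alphabf^k)^2$ for all $k\geq K$. This is precisely the near-optimum estimate in the proof of Theorem~6 of \cite{chen2006study}; it is a property of the point $\alphabf^k$ (valid once $\alphabf^k$ is close enough to $\alphabf^\opt$, using \eqref{delta_2} and the stabilization of the active set forced by non-degeneracy) and therefore does not depend on whether $\alphabf^k$ was produced by SMO or CSMO. Combining (i) and (ii), every unclipped step with $k\geq K$ contracts the gap: $g_{k+1} \leq (1-\kappa/c_1)\,g_k = c\,g_k$ with $c\in[0,1)$.

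The new complication relative to the SMO analysis is that clipped CSMO steps need not enjoy the gain bound of (i), so they must be prevented from destroying the geometric rate. Two observations handle this. First, every step is non-increasing, $g_{k+1}\leq g_k$: the search direction is a descent direction whenever a conjugate step is taken (a clip triggers a reset to plain SMO, so the preceding step was unclipped and the first orthogonality condition holds), and a clipped positive step along a descent direction still lowers the convex one-dimensional restriction of $\Theta$. Second, as already used in the proof of Proposition~1, there is a maximum number $M$ of consecutive clipped CSMO iterations. Hence in any window of $M$ consecutive iterations with $k\geq K$ at least one is unclipped, giving $g_{k+M}\leq c\,g_k$; iterating yields $g_k \leq \text{const}\cdot(c^{1/M})^{k}$, and the strong-convexity bound then gives $\|\alphabf^k - \alphabf^\opt\| = O\!\left((c^{1/(2M)})^{k}\right)$, the claimed linear convergence.

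The main obstacle I anticipate is the rigorous justification of ingredient (ii) as a purely point-wise estimate: one must check that Chen's near-optimum argument uses only feasibility of $\alphabf^k$, its proximity to $\alphabf^\opt$, and the non-degeneracy condition, so that it transfers verbatim to CSMO iterates (in particular to points reached just after a clip), and that the constants $\kappa$, $c_1$, $M$ and $K$ can be fixed uniformly so that the window argument produces a single rate $c^{1/M}$.
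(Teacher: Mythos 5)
Your argument is correct in outline but takes a genuinely different route from the paper's. The paper never works with the objective gap directly: it proves the angle-type error bound \eqref{lower_bound}, $(\alphabf^k - \alphabf^*)^\tr \Qbf \dbf(\alphabf^k) \geq \eta\,\|\alphabf^k - \alphabf^*\|_\Qbf\,\|\dbf(\alphabf^k)\|_\Qbf$ (\cref{csmo_lin_conv}), by a compactness--contradiction argument (normalize $\vbf^k = (\alphabf^k-\alphabf^*)/\|\alphabf^k-\alphabf^*\|_\Qbf$, extract convergent subsequences of $\vbf^{k}$ and $\dbf(\alphabf^{k})$, show the limit $\vbf$ would make $y_q(\Qbf\vbf)_q$ constant on ${\cal H}$, and use the equality constraint to force $\|\vbf\|_\Qbf=0$), and then converts \eqref{lower_bound} into linear convergence of the iterates by invoking modifications of Theorems~4 and~5 of \cite{lopez2015linear}. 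You instead chain the per-step gain bound \eqref{conjugate_dual_gain_3} with an error bound $\Theta(\alphabf^k)-\Theta(\alphabf^\opt)\leq c_1\Delta(\alphabf^k)^2$, and handle clipping by monotonicity plus the bounded number $M$ of consecutive clipped iterations. The two key estimates are in fact essentially equivalent: once the active set has identified ($\alphabf^k_q=\alphabf^*_q$ off ${\cal H}$), the equality constraint cancels the first-order term so that $\Theta(\alphabf^k)-\Theta(\alphabf^\opt)=\tfrac12\|\alphabf^k-\alphabf^*\|_\Qbf^2$, while \eqref{delta_2} gives $\Delta(\alphabf^k)=-(\alphabf^k-\alphabf^*)^\tr\Qbf\dbf(\alphabf^k)$; since $\|\dbf\|_\Qbf$ ranges over finitely many positive values, your ingredient~(ii) is exactly \eqref{lower_bound} in disguise. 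What each route buys: your window argument makes the treatment of clipped iterations explicit and self-contained, whereas the paper delegates that step to the cited theorems; conversely, the paper actually proves the key error bound for CSMO iterates, which is precisely the ingredient you defer to Theorem~6 of \cite{chen2006study} and rightly flag as the main obstacle, since Chen et al.\ establish it along the SMO trajectory and one must re-derive the active-set identification for the CSMO sequence (the paper itself only asserts this ``along the lines of'' Theorem~1 of \cite{lopez2015linear}). So your proposal is a sound alternative decomposition, but to be complete it would need to supply the analogue of \cref{csmo_lin_conv} rather than cite it.
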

The arguments below rely on the ideas in Section 5 of~\cite{lopez2015linear}. 
The key result is  the following.
\begin{proposition}
Under the previous assumptions, there is a $\eta > 0$ and $K$ such that for all $k \geq K$,
\begin{equation}
(\alphabf^k - \alphabf^*)^\tr \Qbf \dbf(\alphabf^k) \geq \eta  \|\alphabf^k - \alphabf^*)\|_\Qbf  \; \|\dbf(\alphabf^k)\|_\Qbf.
\label{lower_bound}
\end{equation}
\label{csmo_lin_conv}
\end{proposition}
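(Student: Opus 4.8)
The plan is to reduce \eqref{lower_bound} to a one-sided \emph{error bound} for the first-order violation $\Delta(\alphabf^k)$ and then to establish that bound using the non-degeneracy hypothesis. By \eqref{delta_2}, for the maximal-violating (first-order) direction one has $(\alphabf^k-\alphabf^*)^\tr\Qbf\dbf(\alphabf^k)=-\Delta(\alphabf^k)$, so the quantity on the left of \eqref{lower_bound} equals $\Delta(\alphabf^k)$ up to its sign. Moreover $\dbf(\alphabf^k)=y_L\ebf_L-y_U\ebf_U$ ranges over a finite set of vectors and, since $\Qbf$ is positive definite, $\|\dbf(\alphabf^k)\|_\Qbf$ stays between two fixed positive constants (namely $2\lambda_{\min}\le\|\dbf(\alphabf^k)\|_\Qbf^2\le 2\lambda_{\max}$). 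Hence \eqref{lower_bound} is equivalent to finding $\eta'>0$ and $K$ with
\[
\Delta(\alphabf^k)\ \ge\ \eta'\,\|\alphabf^k-\alphabf^*\|_\Qbf\qquad(k\ge K),
\]
and this error bound is what I would prove.

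Next I would set up the index structure, following Section~5 of \cite{lopez2015linear}. Since $\Qbf$ is positive definite the whole sequence converges, $\alphabf^k\to\alphabf^*$ and $\gbf^k\to\gbf^*$. Writing $b^*$ for the optimal offset and $h^*_q=y_q\gbf^*_q+b^*$, the KKT conditions give $h^*_q=0$ on the free set $\mathcal F=\set{q\given 0<\alpha^*_q<C}$, while the \emph{non-degeneracy} hypothesis $\mathcal H=\mathcal F$ (with $\mathcal H$ as in \eqref{h_def}) forces a uniform gap $|h^*_q|\ge\mu>0$ at every bound index, with the sign of $h^*_q$ dictated by the membership rules \eqref{L_set}--\eqref{U_set}. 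Two consequences are needed. First, for $k$ large every free index lies in $\Ical_\text{L}\cap\Ical_\text{U}$, so all free components $y_q\gbf^k_q$ lie in the interval $[\min_{l\in\Ical_\text{L}}y_l\gbf^k_l,\ \max_{u\in\Ical_\text{U}}y_u\gbf^k_u]$, whose length is exactly $\Delta(\alphabf^k)$; hence their pairwise spread is at most $\Delta(\alphabf^k)$. Second, matching signs shows $y_q(\alpha^k_q-\alpha^*_q)\,h^*_q\ge\mu\,|\alpha^k_q-\alpha^*_q|$ at every bound index.

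I would then use the quadratic identity obtained from $\Qbf(\alphabf^k-\alphabf^*)=\gbf^k-\gbf^*$ together with the equality constraint $\ybf^\tr(\alphabf^k-\alphabf^*)=0$. Setting $w_q=y_q(\alpha^k_q-\alpha^*_q)$ and $h_q=y_q\gbf^k_q+b^*$, and noting $\sum_q w_q=0$, this yields
\[
\|\alphabf^k-\alphabf^*\|_\Qbf^2=\sum_q w_q\,(h_q-h^*_q).
\]
Splitting the sum into free and bound indices and measuring the free $h_q$ against a fixed free reference $h_{q_0}$, the free part is bounded by $\Delta(\alphabf^k)\sum_{q\in\mathcal F}|w_q|$ using the spread estimate, while the bound part is controlled by the gap relation $w_q h^*_q\ge\mu|w_q|$. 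Converting the resulting $\ell_1$ sums to the $\Qbf$-norm through $\|\cdot\|_1\le\sqrt N\,\|\cdot\|_2\le\sqrt{N/\lambda_{\min}}\,\|\cdot\|_\Qbf$ then gives $\|\alphabf^k-\alphabf^*\|_\Qbf\le c'\,\Delta(\alphabf^k)$, i.e.\ the error bound, and hence \eqref{lower_bound}.

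The hard part is the \textbf{bound (active-constraint) variables}. Their gradient deviations do not vanish relative to $\Delta(\alphabf^k)$: as $k\to\infty$ one has $h_q-h_{q_0}\to h^*_q\neq0$ for a bound index $q$, so these terms cannot be absorbed by the free-variable spread, which tends to $0$. The only leverage is the uniform gap $\mu$ supplied by non-degeneracy, which must be used to show that each bound coordinate is driven to its constraint fast enough that $\sum_{q\notin\mathcal F}|\alpha^k_q-\alpha^*_q|$ is of lower order than $\|\alphabf^k-\alphabf^*\|_\Qbf$ (equivalently, that the bound contribution in the identity above is dominated by $\mu\sum_{q\notin\mathcal F}|w_q|$ on the correct side). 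Making this absorption rigorous --- rather than the finite direction set, the interval/spread estimate, or the norm equivalences, which are all routine once the whole sequence converges --- is where essentially all the effort goes.
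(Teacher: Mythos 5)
Your reduction of \eqref{lower_bound} to the error bound $\Delta(\alphabf^k)\ge\eta'\,\|\alphabf^k-\alphabf^*\|_\Qbf$ is sound (granting that $L,U$ eventually lie in ${\cal H}$ so that \eqref{delta_2} applies, and that $\|\dbf(\alphabf^k)\|_\Qbf$ is pinned between $\sqrt{2\lambda_{\min}}$ and $\sqrt{2\lambda_{\max}}$), and your route is genuinely different from the paper's: the paper argues by contradiction, normalizing $\vbf^k=(\alphabf^k-\alphabf^*)/\|\alphabf^k-\alphabf^*\|_\Qbf$, extracting subsequences with $\vbf^{k_j}\to\vbf$ and $\dbf(\alphabf^{k_j})$ eventually equal to a fixed $\dbf_{LU}$, using the maximal-violating-pair property to show $y^q(\Qbf\vbf)_q$ is constant on ${\cal H}$, and then using the equality constraint to force $\|\vbf\|_\Qbf=0$, contradicting $\|\vbf\|_\Qbf=1$. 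Your direct computation, if completed, would yield an explicit constant $\eta$, which the compactness argument does not.

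However, the proposal has a genuine gap, and it is exactly the point you flag as ``where essentially all the effort goes'': you never dispose of the bound-variable contribution. The missing idea is not an asymptotic absorption argument showing that $\sum_{q\notin{\cal F}}|\alpha^k_q-\alpha^*_q|$ is of lower order; it is \emph{exact finite identification of the active set}. The paper's proof opens with precisely this lemma: as a consequence of the convergence of the primal iterates to $\wbf^*$ and the non-degeneracy hypothesis, there is a $K_0$ such that for all $k\ge K_0$ every $q\in{\cal O}\cup{\cal B}$ has $\alpha^k_q$ sitting exactly at the same bound as $\alpha^*_q$ (proved along the lines of Theorem 1 of \cite{lopez2015linear}). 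Hence $w_q=y_q(\alpha^k_q-\alpha^*_q)=0$ identically for $q\notin{\cal H}$ once $k\ge K_0$, the bound part of your decomposition vanishes rather than needing to be dominated, $\sum_{q\in{\cal H}}w_q=0$ so the reference value $h_{q_0}$ can be subtracted for free, and your free-variable spread estimate then finishes the proof. Note also that this same lemma is what legitimizes your very first step, since \eqref{delta_2} requires $L,U\in{\cal H}$. Without it, the ``uniform gap $\mu$'' you invoke gives no control over \emph{how fast} the bound coordinates reach their constraints, and your plan does not close.
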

Once Proposition \ref{csmo_lin_conv} is proved, the proof of Proposition \ref{csmo_lin_conv_full} follows by easy modifications of Theorems 4 and 5 in~\cite{lopez2015linear}. 
We prove Proposition \ref{csmo_lin_conv} next.
\begin{proof}
Let $\wbf^*$ be the primal minimum and consider, besides ${\cal H}$, the index sets
\begin{align*}
{\cal O} & = \{q : y_q(\wbf^* \cdot \Xbf_q + b^*) < s_q\}, \\
{\cal B} & = \{q : y_q(\wbf^* \cdot \Xbf_q + b^*) > s_q\}.
\end{align*}
A consequence of the convergence of the primal CSMO iterates to the primal minimum $\wbf^*$ is that there is a $K_0$ such that for all $k \geq K_0$, $\alphabf^k_q = 0$ if $q \in {\cal O}$ and $\alphabf^k_q = C$ if $q \in {\cal B}$. 
This can be proved along the lines of Theorem 1 in \cite{lopez2015linear}.
A first consequence of this is that 
$\sum_{\cal O} \alphabf^k_p y_p = 0$ and $\sum_{\cal B} \alphabf^k_p y_p = C \sum_{\cal B} y_p$ for all $k \geq K_0$ and, also, for $\alphabf^*$.
Moreover, and  again for all $k \geq K_0$, the indices $L, U$ for the SMO updates at $\alphabf^k$ must be selected over the pairs $(l, u) \in {\cal H} \times {\cal H}$.
%
%
From now on, we will sometimes write  $\dbf(\alphabf^k)$ instead of $\dbf^k$ for a clearer understanding.

If \eqref{lower_bound} does not hold, we must have 
$$\lim \; \inf \frac{(\alphabf^k - \alphabf^*)^\tr \Qbf \dbf(\alphabf^k)}{\|\alphabf^k - \alphabf^*)\|_\Qbf  \; \|\dbf(\alphabf^k)\|_\Qbf} = 0.$$
Write $\vbf^k = \frac{\alphabf^k - \alphabf^*} {\|\alphabf^k - \alphabf^*\|_\Qbf}$;
then $\|\vbf^k\|_\Qbf = 1$ and there is a subsequence $k_j$ such that $\vbf^{k_j} \cdot \Qbf \dbf(\alphabf^{k_j}) \rightarrow 0$.
Moreover, since the $\vbf^{k_j}$ and $\dbf(\alphabf^{k_j})$ sequences are bounded, we can find a new subsequence, which we also denote as $k_j$, such that 
$\vbf^{k_j} \rightarrow \vbf$ and $\dbf(\alphabf^{k_j}) \rightarrow \dbf$ for some appropriate $\vbf$ and $\dbf$.
Furthermore, since the $\dbf(\alphabf^{k_j})$ only have two nonzero $\pm 1$ components, the same must be true for $\dbf$;
thus we can assume $\dbf(\alphabf^{k_j}) = \dbf = \dbf_{L, U}$ for some $L, U \in {\cal H} \times {\cal H}$ and for all $k_j > K_0$.

We will show next that such a $\vbf$ must be 0, contradicting that $\|\vbf\|_Q = \lim \|\vbf^{k_j}\|_\Qbf = 1$.
Now, 
the convergence of the $\alphabf^k$ to $\alphabf^*$ and the non-degeneracy assumption imply that for some $K_1$ and all $k \geq K_1$, we have $0 <  \alpha^k_q <C$ for all $q \in {\cal H}$.
It thus follows that any pair $(p, q) \in {\cal H} \times {\cal H}$ is eligible for any $\alphabf^{k_j}$.
But since $L, U$ is a maximum violating pair at $\alphabf^{k_j}$, this implies 
$$-(\alphabf^{k_j} - \alphabf^*)^\tr \Qbf \dbf_{pq} = \Delta(\dbf_{pq}) \leq \Delta(\alphabf^{k_j}) = -(\alphabf^{k_j} - \alphabf^*)^\tr \Qbf \dbf(\alphabf^{k_j}),$$
where, in a slight abuse of the notation, we write 
$\Delta(\dbf_{pq}) = \nabla \Theta(\alphabf^k) \cdot \dbf_{pq}$
and we have used \eqref{delta_2} in the last equality.
%
Taking limits, we would have 
$$-\vbf^\tr \Qbf \dbf_{pq} = \lim \; -\vbf^{k_j} \cdot \Qbf \dbf_{pq} \leq \lim \; -\vbf^{k_j} \cdot \Qbf \dbf^{k_j} = -\vbf^\tr \Qbf \dbf = 0,$$
or, in other words, $y^q(\Qbf \vbf)_q - y^p(\Qbf \vbf)_p \leq 0$ for all $(p, q) \in {\cal H} \times {\cal H}$.
But reversing the roles of $p$ and $q$, this also implies $y^q(\Qbf \vbf)_q = y^p(\Qbf \vbf)_p$; 
in other words, there is a $\nu \neq 0$ such that 
$(\Qbf \vbf)_q = \nu = y^q \nu$ for all $q \in {\cal H}$ such that $y^q = 1$, 
and, similarly, $(\Qbf \vbf)_q = -\nu = y^q \nu$ for all $q \in {\cal H}$ such that $y^q = -1$.

We finish the proof by showing that the preceding implies $(\alphabf^{k_j} - \alphabf^*) \cdot \Qbf \vbf \rightarrow 0$.
First, taking $K_2 = \max (K_0,K_1)$, since $\alphabf^{k_j}_q - \alphabf^*_q = 0$ if $q \in {\cal O} \cup {\cal B}$, we must have 
$$(\alphabf^{k_j} - \alphabf^*) \cdot \Qbf \vbf = \sum_{q \in {\cal H}} (\alphabf^{k_j}_q - \alphabf^*_q) (\Qbf \vbf)_q
= \nu  \sum_{q \in {\cal H}} (\alphabf^{k_j}_q - \alphabf^*_q) y^q$$
for all $k_j \geq K_2$.
But we also have $\sum_q \alphabf^{k_j}_q y^q = 0$, i.e.
$$
0 = \sum_{q \in {\cal O} \cup {\cal B} \cup {\cal H}} \alphabf^{k_j}_q y^q = 
C \sum_{q \in {\cal B} } y^q  +  \sum_{q \in{\cal H}} \alphabf^{k_j}_q y^q ,
$$
and, similarly, 
$$0 = \sum_{q \in {\cal O} \cup {\cal B} \cup {\cal H}} \alphabf^*_q y^q = C \sum_{q \in {\cal B} } y^q  + \sum_{q \in{\cal H}} \alphabf^*_q y^q.$$
But this implies 
$$0 = \sum_q \alphabf^{k_j}_q y^q - \sum_q \alphabf^*_q y^q = \sum_{q \in{\cal H}} (\alphabf^{k_j}_q - \alphabf^*_q) y^q$$
and, therefore $0 = (\alphabf^{k_j} - \alphabf^*) \cdot \Qbf \vbf$, 
which implies $0 = \vbf^{k_j} \cdot \ \Qbf \vbf$ and, taking limits, $0 = \vbf \cdot \Qbf \vbf = \| \vbf \|_\Qbf$, a contradiction that ends the proof of \cref{csmo_lin_conv}.
\end{proof}

\section{Experiments}
\label{sec:exp}
\subsection{Datasets and CSMO Implementation}

The goal of this Section is to empirically compare the running times of standard second order SMO and its conjugate gradient counterpart over 12 relatively large classification and regression datasets and various $C$, $\gamma$ and  $\epsilon$ configurations.

\begin{table}[htbp]
\centering
\small
\begin{subtable}[b]{0.55\textwidth}
\centering
\begin{tabular}{l
S[table-format=6]
S[table-format=3]
S[table-format=5]
S[table-format=6]}
\toprule
{Dataset}          &  {$n$} & {$d$}& {$n^+$}& {$n^-$}\\
\midrule
\data{adult8}      &  22696 &  123 &   5506 &  17190 \\
\data{web8}        &  49749 &  300 &   1479 &  48270 \\
\data{ijcnn1}      &  49990 &   22 &   4853 &  45137 \\
\data{cod-rna}     &  59535 &    8 &  19845 &  39690 \\
\data{mnist1}      &  60000 &  784 &   6742 &  53258 \\
\data{skin}        & 245057 &    3 &  50859 & 194198 \\
\bottomrule
\end{tabular}
\caption{Classification.}
\end{subtable}
\begin{subtable}[b]{0.44\textwidth}
\centering
\begin{tabular}{l
S[table-format=6]
S[table-format=3]}
\toprule
{Dataset}          & {$n$} & {$d$} \\
\midrule
\data{abalone}     &  4177 &   8   \\
\data{cpusmall}    &  6143 &  12   \\
\data{trajectory}  & 20000 & 297   \\
\data{cadata}      & 20640 &   7   \\
\data{year}        & 46215 &  90   \\
\data{ctscan}      & 53500 & 385   \\
\bottomrule
\end{tabular}
\caption{Regression.}
\end{subtable}
\caption{Number of observations ($n$), dimensions ($d$) and class ratios ($n^+$, $n^-$) for the datasets considered.} \label{tab:datasets}
\end{table}

The datasets used for the experiments are summarized in \cref{tab:datasets}. All of them can be found in LIBSVM's data repository, except from \data{ctscan}, \data{year} (UCI Machine Learning Repository\footnote{\url{http://archive.ics.uci.edu/ml}}), and \data{trajectory}, \data{mnist1} (Machine Learning Dataset Repository\footnote{\url{http://mldata.org}}). The MNIST database consists on $28 \times 28$ images of handwritten digits that have been preprocessed and normalized. Since it is a multi-class classification problem we use here the binary version that tries to distinguish the number $1$ from the rest. We choose these datasets since they offer a wide variety of sample sizes and dimensions. 
We have run our experiments in Intel Xeon E5-2640 computer nodes  with \SI{2.60}{\giga\hertz}, 8 cores and \SI{32}{\giga\byte} RAM. Note that on such machines kernel SVMs are probably not going to scale well with sample sizes above 100K--200K.

For second order SMO we will use the C++ implementation in the LIBSVM library. 
We have implemented conjugate SMO versions for $C$-SVC  and $\epsilon$--SVR inside the same C++ LIBSVM code, and it is freely available on Github\footnote{\url{https://github.com/albertotb/libsvm_cd}}. The LIBSVM cache works perfectly with these options as well as most of the other options for running LIBSVM.
However, our Conjugate SMO implementation is not adapted to shrinking. We  have also not tested the probabilistic estimates for classification. 
Actual time comparisons are presented in \cref{time_comp} but before this we address in the next Subsection the effect of the cache in LIBSVM performance.

\subsection{Cache Effects}\label{sec:cache}

SVM training times depend obviously on the number of iterations needed to achieve a desired tolerance but also on the times each iteration requires.
These times are not homogeneous as they depend on the number of kernel operations (KOs) to be done at each iteration.
To minimize on this LIBSVM implements a cache where KO results are stored and retrieved when needed, provided of course they have been previously performed and are stil in the cache.
This has a large influence on iteration time and, hence, on the overall SVM training times.

Therefore, cache size may have an important influence when comparing standard and conjugate SMO times.
Since at each iteration conjugate SMO gives a larger decrease to the dual function, it requires less iterations than standard SMO.
If the cache is small, many of these iterations will require KOs. 
In turn, they will dominate iteration times giving an advantage to the method requiring less iterations (presumably, conjugate SMO).

However, if a large cache is used, it is usually filled in the initial iterations which update at most two rows per iteration.
This implies that standard and conjugate SMO will need a similar number of slower iterations to fill the cache and that the overall training time is likely to be dominated by that of the initial cache filling iterations.
This implies that since subsequent iterations will be much faster, the advantage of the method requiring less iterations is likely to be smaller.
In summary, when cache sizes are rather small, conjugate SMO should have clearly better running times but things should even up with large caches.

\begin{table}[thbp]
\centering
\footnotesize
\sisetup{detect-weight=true,detect-inline-weight=math}%
\begin{tabular}{lS[table-format=5]S[table-format=1.2]S[table-format=1.2]S[table-format=3.2]S[table-format=3.2]S[table-format=4.2]S[table-format=4.2]}
\toprule
    &      & \multicolumn{6}{c}{Time (s)} \\
\cmidrule(lr){3-8}
    &      & \multicolumn{2}{c}{$C=1$} & \multicolumn{2}{c}{$C=100$} & \multicolumn{2}{c}{$C=\num{10000}$} \\
\cmidrule(lr){3-4}\cmidrule(lr){5-6}\cmidrule(lr){7-8}
Dataset    & {Cache} & {SMO} & {CSMO} & {SMO} & {CSMO} & {SMO} & {CSMO} \\
\midrule
\data{adult8}     & 1     &           5.52 & \bfseries 5.29  &          100.39 & \bfseries 62.47  &          6216.00 & \bfseries 2619.61  \\
           & 50    &           4.83 & \bfseries 4.43  &           26.10 & \bfseries 21.49  &          3512.05 & \bfseries 1908.27  \\
           & 100   &  \bfseries 4.31  &          4.42 &           12.70 & \bfseries 11.41  &          2405.84 & \bfseries 1350.65  \\
           & 500   &  \bfseries 4.24  &          4.37 &            9.23 & \bfseries  8.34  &           318.93 & \bfseries  211.85  \\
           & 1000  &  \bfseries 4.07  &          4.22 &            8.67 & \bfseries  7.82  &           305.88 & \bfseries  197.70  \\
           & 5000  &           4.05 & \bfseries 4.04  &            8.68 & \bfseries  7.83  &           306.33 & \bfseries  196.32  \\
           & 10000 &  \bfseries 3.96  &          4.04 &            8.67 & \bfseries  7.82  &           305.95 & \bfseries  196.20  \\ [0.5em]
\data{cpusmall}   & 1     &           1.36 & \bfseries 1.10  &           68.83 & \bfseries 47.95  &          2960.25 & \bfseries 2066.70  \\
           & 50    &           0.64 & \bfseries 0.59  &           22.12 & \bfseries 16.96  &           944.43 & \bfseries  773.95  \\
           & 100   &           0.64 & \bfseries 0.58  &           22.08 & \bfseries 16.89  &           916.96 & \bfseries  710.93  \\
           & 500   &           0.63 & \bfseries 0.59  &           22.08 & \bfseries 16.91  &           913.41 & \bfseries  710.99  \\
           & 1000  &           0.63 & \bfseries 0.58  &           22.08 & \bfseries 16.91  &           913.43 & \bfseries  711.76  \\
           & 5000  &           0.64 & \bfseries 0.59  &           22.09 & \bfseries 16.88  &           913.38 & \bfseries  711.64  \\
           & 10000 &           0.65 & \bfseries 0.59  &           22.10 & \bfseries 16.92  &           913.35 & \bfseries  711.00  \\ [0.5em]
\data{trajectory} & 1     &           2.37 & \bfseries 2.03  &           68.93 & \bfseries 54.81  &          1765.73 & \bfseries 1114.29  \\
           & 50    &  \bfseries 0.74  & \bfseries 0.74  &           32.32 & \bfseries 26.69  &           721.16 & \bfseries  529.22  \\
           & 100   &  \bfseries 0.74  &          0.75 &           10.96 & \bfseries 10.78  &           747.45 & \bfseries  514.02  \\
           & 500   &  \bfseries 0.75  & \bfseries 0.75  &            6.52 & \bfseries  6.58  &           120.86 & \bfseries   89.50  \\
           & 1000  &  \bfseries 0.74  & \bfseries 0.74  &            7.17 & \bfseries  6.64  &            79.48 & \bfseries   58.08  \\
           & 5000  &           0.75 & \bfseries 0.74  &            6.25 & \bfseries  6.18  &            76.44 & \bfseries   60.82  \\
           & 10000 &           0.74 & \bfseries 0.73  &            6.41 & \bfseries  6.28  &            74.66 & \bfseries   58.25  \\ [0.5em]
\data{web8}       & 1     &           5.11 & \bfseries 4.91  &           52.33 & \bfseries 46.86  &  \bfseries 356.63  &           411.22 \\
           & 50    &  \bfseries 3.53  &          3.54 &  \bfseries 22.78  &          23.27 &           228.59 &  \bfseries 207.61  \\
           & 100   &  \bfseries 3.37  &          3.46 &  \bfseries  6.22  &           6.87 &           143.66 &  \bfseries 138.97  \\
           & 500   &  \bfseries 3.06  &          3.16 &  \bfseries  3.93  &           4.36 &   \bfseries 17.98  &            20.08 \\
           & 1000  &  \bfseries 3.01  &          3.09 &  \bfseries  3.94  &           4.36 &   \bfseries 18.28  &            19.52 \\
           & 5000  &  \bfseries 3.00  &          3.09 &  \bfseries  3.94  &           4.37 &   \bfseries 18.20  &            19.58 \\
           & 10000 &  \bfseries 3.00  &          3.10 &  \bfseries  3.94  &           4.35 &   \bfseries 17.99  &            19.27 \\
\bottomrule
\end{tabular}

\caption{Execution time for the \data{adult8}, \data{web8}, \data{cpusmall} and \data{trajectory} datasets and different cache sizes.}
\label{tab:cache}
\end{table}

We illustrate these effects over the \data{adult8}, \data{web8}, \data{cpusmall} and \data{trajectory} datasets,
using LIBSVM's default values of $\epsilon_{\text{KKT}}=0.001$ for the termination criterion tolerance and of $\gamma=1/d$ for the kernel width; features were individually scaled to $[-1, 1]$ range.
Then, for every dataset we measure execution times for $C=10, 100, \num{10000}$ and seven different cache sizes, 
\SIlist[list-units = single]{1;50;100;500;1000;5000;10000}{\mega\byte}.
Every $C$ and cache size combination was run $10$ times and we repeated this procedure $3$ times. At the end we computed the average of the $10$ runs and for every value took the minimum of the $3$ repetitions. 

\begin{figure}[thbp]
\centering
\includegraphics[width=\textwidth]{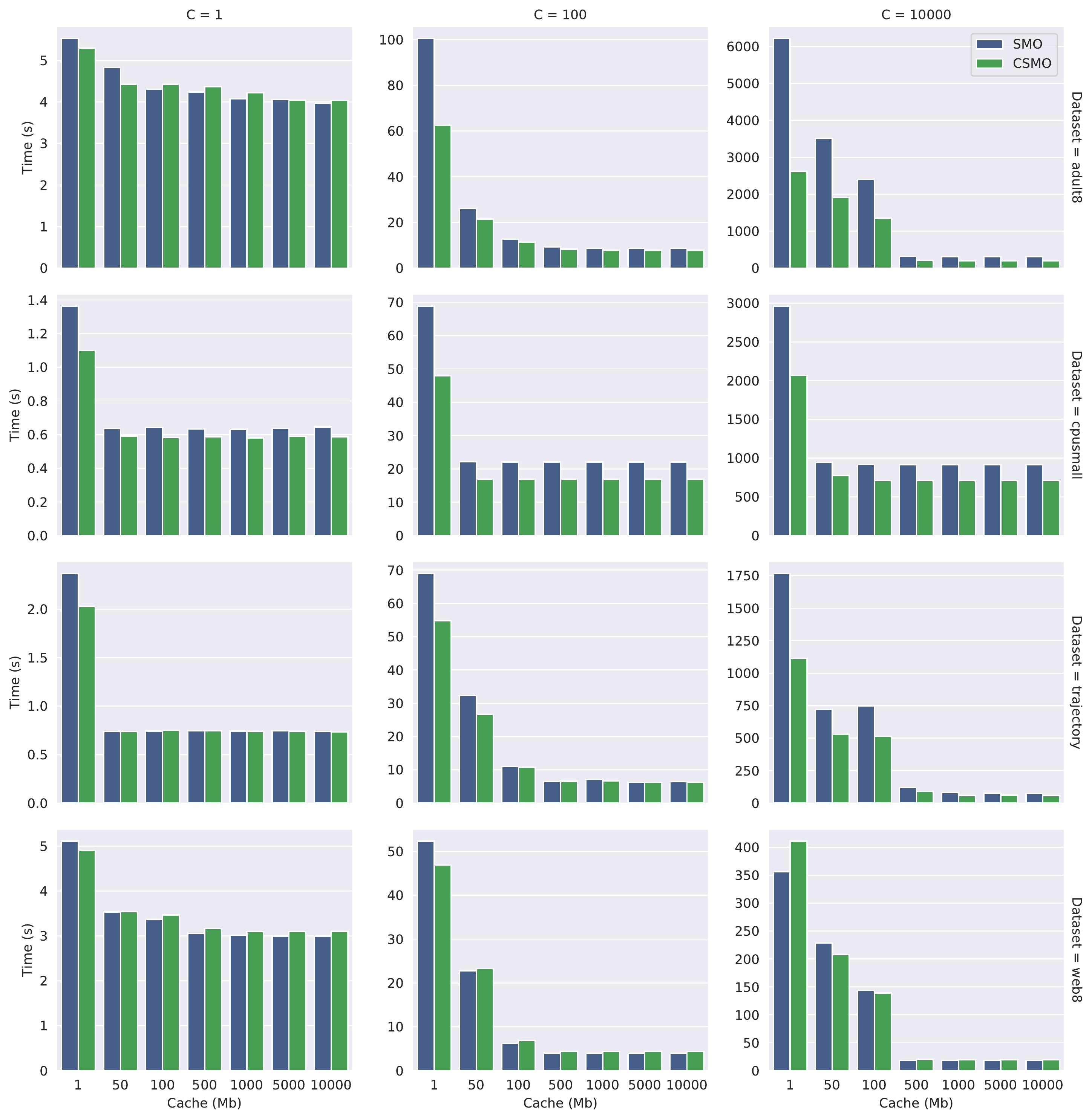}
\caption{Execution time as a function of the cache size comparison between SMO and CSMO for the \data{adult8}, \data{web8}, \data{trajectory} and \data{cpusmall} datasets and different $C$ values.}
\label{img:cache_time}
\end{figure}

Execution times in seconds together with the relative difference is given in \cref{tab:cache} for every $C$ and cache size values and all datasets. Comparing CSMO to standard SMO we can see that it is usually faster for smaller caches and larger $C$ values.
The same results are also depicted in \cref{img:cache_time}. 
The conclusions from both table and figure essentially agree with  our previous discussion.
First, as it was expected, execution times decrease as cache sizes increase, since more kernel rows can be stored and reused with the larger caches. 
Second, conjugate SMO performs better when caches are small, particularly in the longer training times arising with the stronger $C=100$ regularization.
However, there is a critical value for the cache size from which larger values offer no improvement. This upper limit for the cache size depends on the final number of support vectors which, in turn, depends on the $C$ and $\gamma$ values and, of course, sample size.
This critical value makes possible to store all the support vectors and lies between \SIlist[list-units = single]{100;500}{\mega\byte} for all our datasets (the LIBSVM default cache size is \SI{100}{\mega\byte}).

In any case, while RAM memory above \SI{100}{\giga\byte} is not uncommon on rack blade servers, it may be very well the case that several SVMs have to be trained in parallel.
A common such case is crossvalidation-based hyper-parameter searches, where a number of models well above 100 may have to be tested.
In such a situation working with, say, \SI{1}{\giga\byte} caches may not be feasible.

\subsection{Time Comparisons for Different Hyper-parameter Configurations}
\label{time_comp}

Next we will perform time comparisons for standard and conjugate SMO over a wide variety of $C$, $\gamma$ and $\epsilon$  values, again over the problems in \cref{tab:datasets}.
In fact, hyper-parameter search is the costliest task when setting up an SVM model.
We will perform our timing experiments on one such grid search scenario, using the hyper-parameter values considered in \cite{fan2005working}.
For classification they work with equi-logartihmically spaced $C$ and $\gamma$ values.
More precisely, $C$ ranges from  $2^{-5}$ to $2^{15}$ with increments of $2$ in the $\log_2$ scale, that is, $2^{-5}, 2^{-3}, \dots, 2^{13}, 2^{15}$. 
Similarly $\gamma$ ranges from $\gamma=2^{-15}$ to $\gamma=2^3$, again with $\log_2$-scaled steps of $2$. 
This gives a total of $11\times 10=110$ $(C, \gamma)$ pairs.

For regression the $\gamma$ ranges are retained, the $C$ range is reduced to $[2^{-1}, 2^{-15}]$ ($\log_2$-scaled steps of $2$) and the $\epsilon$ hyper-parameter ranges from $2^{-8}$ to $2^{-1}$, now with log-scale increments of $1$ \citep{fan2005working}.
This would result in a large $9 \times 10 \times 8=720$ number of models to be considered. 
To reduce this, in our regression experiments we will skip the top $C=2^{13},\ 2^{15}$ and bottom $\gamma = 2^{-13},\ 2^{-15}$ values of the $C$ and $\gamma$ ranges. 
The final number of grid points is hence $7 \times 8 \times 8 = 448$, two-thirds of the original size. 
Note that we are performing 5-fold cross validation, and therefore the total number of models to be fitted by either standard or conjugate SMO is $5$ times the number of grid points, i.e., $550$ for classification and \num{2240} for regression.
Finally, and according to our previous discussion, we are going to use two values for the size of the cache, a somewhat small \SI{100}{\mega\byte}, where conjugate SMO should have an advantage, and a larger \SI{1}{\giga\byte}, where both methods would be in a more balanced footing. 
Recall that, as it can be seen in \cref{img:cache_time}, there is almost no time improvement when having a cache size larger than \SI{1}{\giga\byte}. 
The convergence tolerance will by $\epsilon_{\text{KKT}} = 0.001$ in all cases. Finally, for each dataset, cache size and hyper-parameter configuration we compute the relative time difference between SMO and Conjugate SMO,
$$\text{RTD} = \frac{\text{Time}(\text{SMO}) - \text{Time}(\text{CSMO})}{\text{Time}(\text{SMO})} \times 100.$$

\begin{table}[thbp]
\centering
\footnotesize
\begin{tabular}{lS[table-format=2]S[table-format=+2]S[table-format=2.2]S[table-format=2.2]S[table-format=4]S[table-format=3.2]S[table-format=3.2]S[table-format=+1.2]}
\toprule
          & \multicolumn{2}{c}{Hyp. ($\log_2$)}  & \multicolumn{2}{c}{Accuracy (\%)} & & \multicolumn{2}{c}{Time (h)} & \\
\cmidrule(lr){2-3}\cmidrule(lr){4-5}\cmidrule(lr){7-8}
{Dataset} & {$C^\opt$} & {$\gamma^\opt$} & {SMO} & {CSMO} & {Cache} & {SMO} & {CSMO} & {RTD}\\
\midrule
\data{adult8}   & 11 &  -9 & 84.438 & 84.438 & 100  &  13.124 &    8.729 & 33.488 \\
         &    &     &        &        & 1000 &   3.317 &    2.447 & 26.232 \\ [0.5 em]
\data{cod-rna}  & 15 & -11 & 94.744 & 94.746 & 100  &  93.594 &   51.356 & 45.129 \\
         &    &     &        &        & 1000 &  61.728 &   29.911 & 51.544 \\ [0.5 em]
\data{ijcnn1}   &  5 &   1 & 98.848 & 98.848 & 100  &   2.755 &    2.602 &  5.533 \\
         &    &     &        &        & 1000 &   1.535 &    1.510 &  1.649 \\ [0.5 em]
\data{mnist1}   &  1 &  -7 & 99.812 & 99.812 & 100  & 604.167 &  543.346 & 10.067 \\
         &    &     &        &        & 1000 & 491.657 &  410.269 & 16.554 \\ [0.5 em]
\data{skin}     &  5 &  -5 & 99.972 & 99.972 & 100  &  23.873 &   23.383 &  2.051 \\
         &    &     &        &        & 1000 &  23.818 &   23.564 &  1.069 \\ [0.5 em]
\data{web8}     &  3 &  -5 & 98.864 & 98.864 & 100  &   5.757 &    5.831 & -1.288 \\
         &    &     &        &        & 1000 &   4.702 &    4.768 & -1.400 \\
\bottomrule
\end{tabular}
\caption{Results of a full hyper-parameter search (classification).}
\label{tab:svc_time}
\end{table}

\begin{table}[thbp]
\centering
\footnotesize
\begin{tabular}{lS[table-format=1]S[table-format=+1]S[table-format=+1]S[table-format=4.3]S[table-format=4.3]S[table-format=4]S[table-format=4.2]S[table-format=4.2]S[table-format=2.2]}
  \toprule
            & \multicolumn{3}{c}{Hyp. ($\log_2$)} & \multicolumn{2}{c}{MSE} & & \multicolumn{2}{c}{Time (h)} &\\
  \cmidrule(lr){2-4}\cmidrule(lr){5-6}\cmidrule(lr){8-9}
  {Dataset} & {$C^\opt$} & {$\gamma^\opt$} & {$\epsilon^\opt$} & {SMO} & {CSMO} & {Cache} & {SMO} & {CSMO} & {RTD} \\
  \midrule
\data{abalone}    & 5 & -1 & -1 &    4.500 &    4.500 & 100  &    1.877 &    1.219 & 35.052 \\
           &   &    &    &          &          & 1000 &    1.890 &    1.229 & 34.980 \\ [0.5 em]
\data{cadata}     & 9 &  3 & -1 & 3066.524 & 3066.526 & 100  &   11.495 &    9.250 & 19.525 \\
           &   &    &    &          &          & 1000 &   10.546 &    8.712 & 17.389 \\ [0.5 em]
\data{cpusmall}   & 9 & -7 & -4 &    0.028 &    0.028 & 100  &   13.379 &    9.703 & 27.472 \\
           &   &    &    &          &          & 1000 &   14.168 &   10.561 & 25.461 \\ [0.5 em]
\data{ctscan}     & 3 & -9 & -8 &    0.001 &    0.001 & 100  & 2463.891 & 2061.566 & 16.329 \\
           &   &    &    &          &          & 1000 & 1395.560 & 1220.699 & 12.530 \\ [0.5 em]
\data{trajectory} & 1 & -9 & -3 &    0.999 &    0.999 & 100  &  177.447 &  174.635 &  1.585 \\
           &   &    &    &          &          & 1000 &   24.995 &   23.898 &  4.390 \\ [0.5 em]
\data{year}       & 3 & -5 & -4 &    0.587 &    0.587 & 100  & 6546.589 & 4637.676 & 29.159 \\
           &   &    &    &          &          & 1000 & 1112.437 &  894.214 & 19.617 \\
\bottomrule
\end{tabular}
\caption{Results of a full hyper-parameter search (regression).}
\label{tab:svr_time}
\end{table}

We first report our experimental results in \cref{tab:svc_time,tab:svr_time}.
Their left columns show the optimal hyper-parameters for each classification and regression problem as well as the accuracy (as a percentage) or mean absolute error of the optimal classification and regression models.
As it can be seen, both SMO models arrive at the same optimal hyper-parameter combination and obtain the same accuracy or error 
(this is also the case in all our other hyper-parameter settings).
The tables also show at their right total accumulated times of the grid searches performed, as well as the relative time difference (RTD). 
Recall that we have applied $5$-fold cross validation;
accordingly, the times reported correspond to the 5-fold averages of the accumulated hyper-parameter search times over each fold. 

We can see that, in classification, CSMO accumulated times are always smaller than those of SMO (i.e., RTD values are positive) except for the \data{w8a} dataset, where standard SMO is slightly faster. 
Conjugate SMO times are clearly better for \data{adult8} and \data{cod-rna} and also for \data{ijcnn1}, \data{mnist1} and \data{skin}, although with a smaller edge;
CSMO is slightly behind SMO for \data{w8a}.
Also, and as expected, the time differences are usually larger for the \SI{100}{\mega\byte} cache.
The situation for regression is fairly similar with now CSMO accumulated times being smaller for all datasets.
Here it has a clear advantage for \data{abalone}, \data{cpusmall} and \data{year} and, though slightly smaller, also for \data{cadata} and \data{ctscan};
times are closer for \data{trajectory}.
Again, time differences are larger for the \SI{100}{\mega\byte} cache.


\begin{figure}[thbp]
\centering
\includegraphics[width=\textwidth]{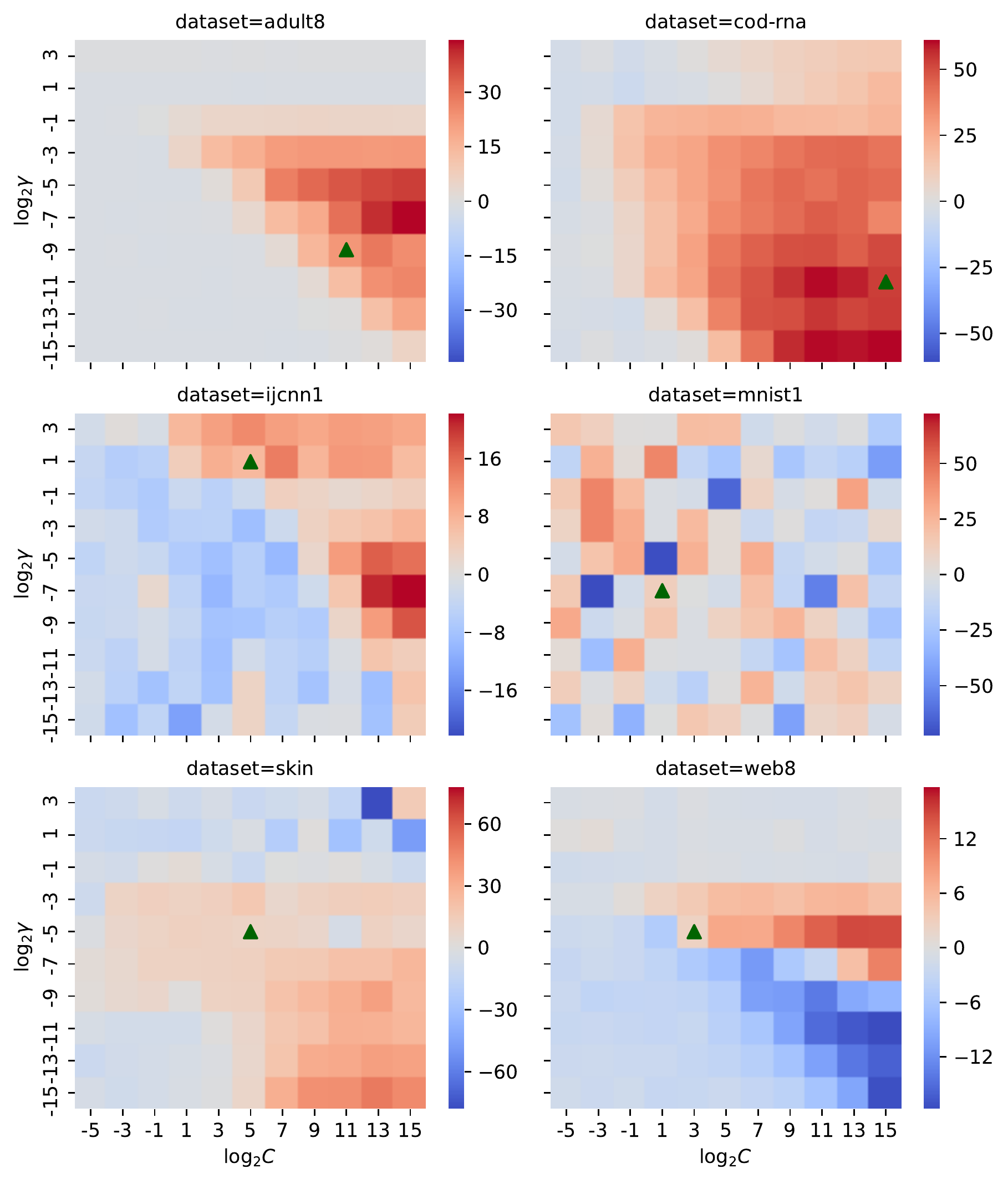}
\caption{Relative time difference heatmap with a cache size of \SI{100}{\mega\byte} for the different $C$ and $\gamma$ values of a full hyper-parameter search (classification).}
\label{img:svc_time_100}
\end{figure}

\begin{figure}[thbp]
\centering
\includegraphics[width=\textwidth]{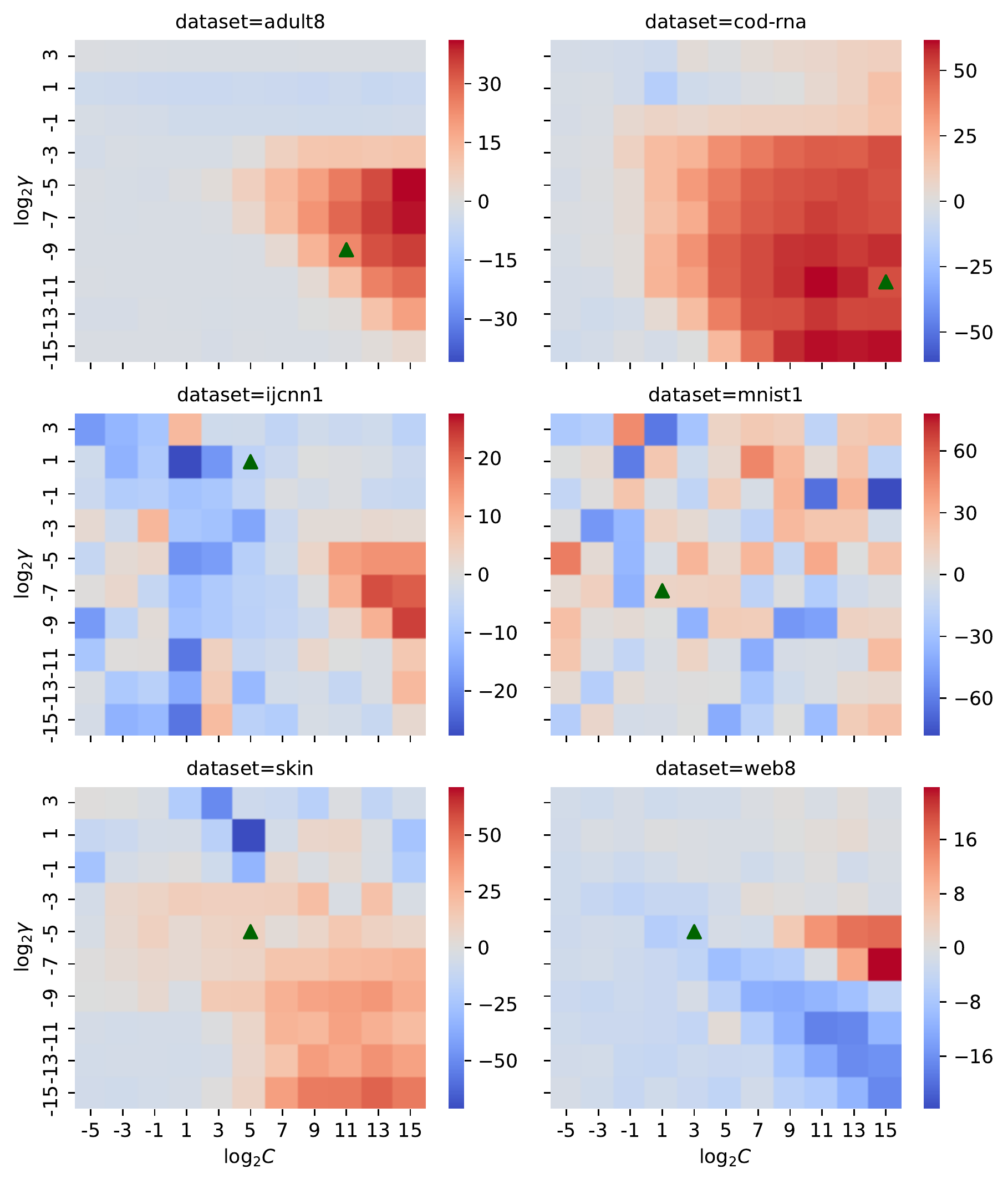}
\caption{Relative time difference heatmap with a cache size of \SI{1}{\giga\byte} for the different $C$ and $\gamma$ values of a full hyper-parameter search (classification).}
\label{img:svc_time_1000}
\end{figure}

\begin{figure}[thbp]
\centering
\includegraphics[width=\textwidth]{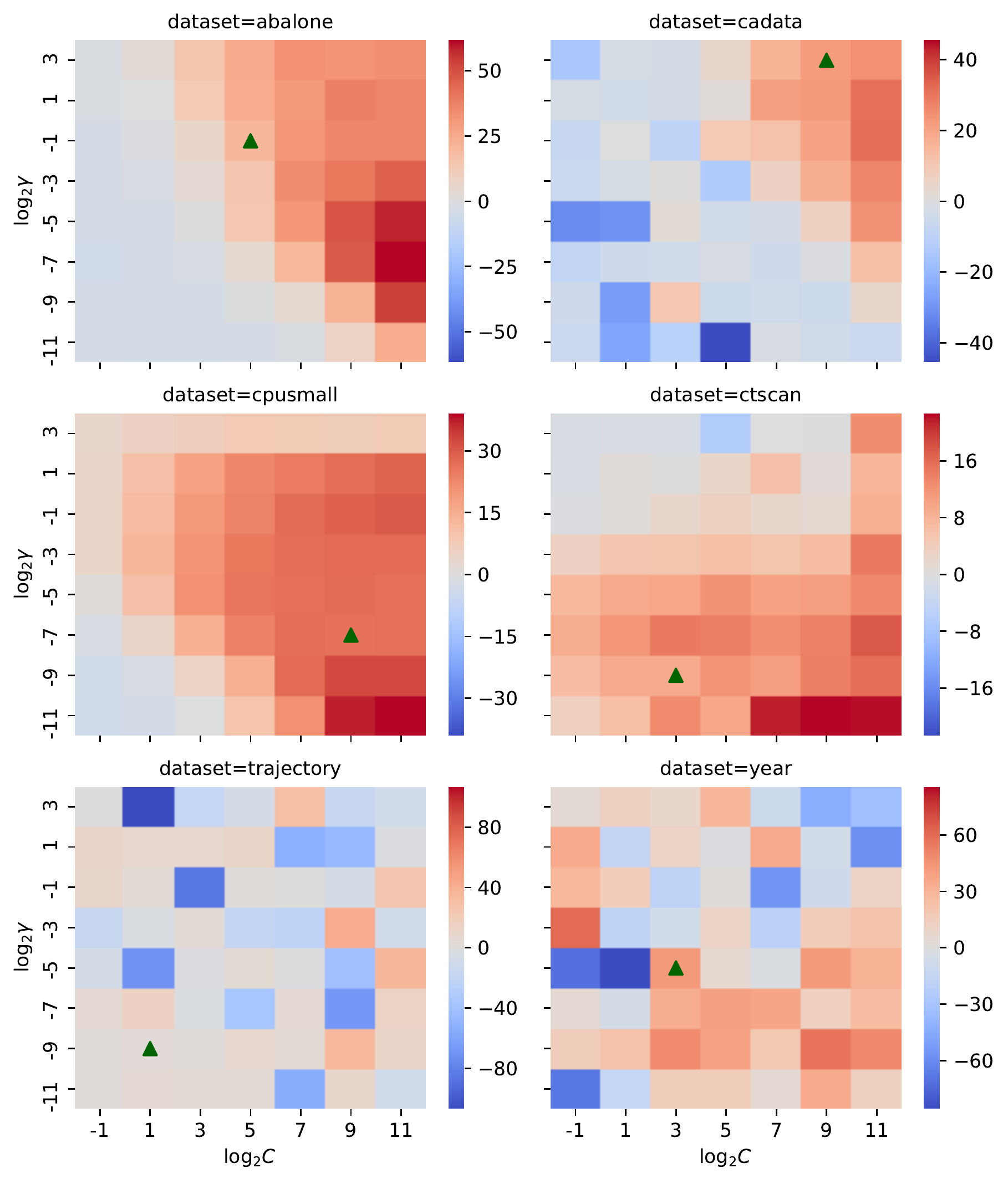}
\caption{Relative time difference heatmap with a cache size of \SI{100}{\mega\byte} for the different $C$ and $\gamma$ values and optimal $\epsilon$ of a full hyper-parameter search (regression).}\label{img:svr_time_100}
\end{figure}

\begin{figure}[thbp]
\centering
\includegraphics[width=\textwidth]{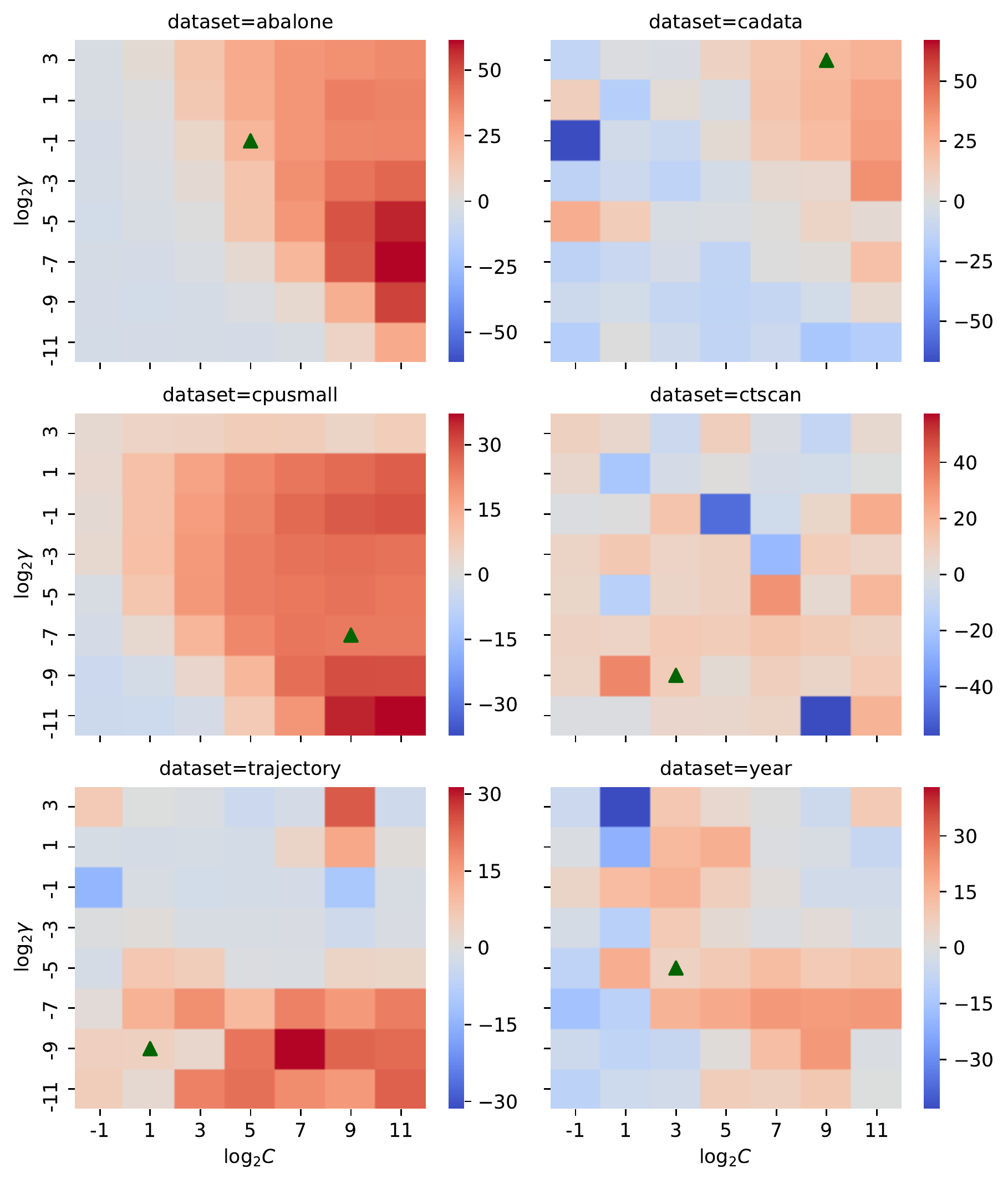}
\caption{Relative time difference heatmap with a cache size of \SI{1}{\giga\byte} for the different $C$ and $\gamma$ values and optimal $\epsilon$ of a full hyper-parameter search (regression).}\label{img:svr_time_1000}
\end{figure}

We disaggregate the relative timing differences in classification over all the hyper-parameter combinations in \cref{img:svc_time_100} for the \SI{100}{\mega\byte} cache. 
Here we can see how there is usually an upper-triangle in the $(C, \gamma)$ grid with red-colored RTD values where conjugate SMO consistently outperforms standard SMO. 
It is also interesting to see that, in general, smaller $\gamma$ values (i.e., broader Gaussians) benefit the conjugate implementation but only up to a certain threshold, from which SMO starts to be better. 
On the other hand it is quite clear that CSMO outperforms SMO for larger $C$ values (i.e., more regularization).
We point out that the optimal parameter combinations, shown with a green triangle, are often located in regions where CSMO outperforms SMO. 
This implies, first, that conjugate SMO will be effective exploring these regions when searching for optimal hyper-parameters (and more so if these searches have to be refined) and, second, that it will also help training optimal models. 
\cref{img:svc_time_1000} shows a similar situation for the $(C, \gamma)$ disaggregation now using the \SI{1}{\giga\byte} cache, although with a smaller advantage for conjugate SMO.

The $(C, \gamma)$ disaggregation for the regression problems is shown in \cref{img:svr_time_100} and \cref{img:svr_time_1000} for \SI{100}{\mega\byte} and \SI{1}{\giga\byte} caches respectively.
Given that here we are working with three-dimensional hyper-parameter grids, we report for each problem the $(C, \gamma)$ disaggregation of the times measured working with the optimal $\epsilon$ hyper-parameter.
The results here are similar to the ones for classification.
We can see that conjugate SMO timings are smaller for most of the hyper-parameter combinations in \data{abalone, cpusmall} and \data{ctscan} and also for \data{trajectory} and the \SI{1}{\giga\byte} cache.
Times for \data{cadata} and \data{year} are more even.

Finally, we point out that while in some problems the hyper-parameter disaggregation shows a structure favorable to Conjugate SMO, there are also other problems where such structure is not clear. 
But even in these cases, we see that CSMO is competitive in almost all hyper-parameter combinations. Finally,
\cref{tab:svc_time,tab:svr_time} allow us to conclude that in most problems the ``winning'' CSMO hyper-parameter combinations clearly out-weight the losing ones.


\section{Discussion}
\label{sec:disc}
In this work we have proposed a conjugate variant of the SMO algorithm, the state-of-the-art approach to solve the optimization problem required for training Support Vector Machines.
As the original SMO, the new Conjugate SMO, CSMO, can be used both for classification and regression tasks, and at each iteration it only implies a slight increase on the computational complexity compared with that of standard SMO.
In practice, however, most of the complexity of both algorithms lies in the computation of the kernel columns, since they account for around \SIrange{75}{80}{\percent} of the running time. Some computational tricks such as the cache greatly help in making the training efficient for datasets of up to 200K observations. Conjugate SMO further improves on this by reducing the number of iterations needed for convergence, and thus it also reduces the possibility of having a cache miss.

In addition, we have provided a theoretical proof of the convergence of a subsequence of the CSMO dual multipliers and of the entire primal vector sequence to a dual optimum and the unique primal one, respectively.
We have also proved a linear convergence rate when the kernel matrix is positive definite and a non-degeneracy property holds.
These conditions are also assumed for linear convergence of standard SMO, but our proofs follow different arguments and may have an interest of their own.

We have implemented Conjugate SMO within the LIBSVM library and have performed extensive experiments over 12 classification and regression datasets, most of them with a large number of samples and/or features.
CSMO often outperformed SMO over a wide range of hyper-parameter values and, moreover, the total time of performing a grid search with 5-fold cross-validation was lower for CSMO on all datsets but one, where the difference was rather small. 
As a conclusion, CSMO appears to be always competitive and often the best option for optimal hyper-parameter search.
Also, once the optimal hyperparameters are found, CSMO achieves most of the time a reduction in the training time of the resulting optimal model.

\section*{Acknowledgments}
With partial support from Spain's grant TIN2016-76406-P. 
Work supported also by project FACIL--Ayudas Fundaci\'{o}n BBVA a Equipos de Investigación Cient\'{i}fica 2016,  the UAM--ADIC Chair for Data Science and Machine Learning and the Instituto de Ingenier\'{i}a del Conocimiento.  
We gratefully acknowledge the use of the facilities of Centro de Computaci\'{o}n Cient\'{i}fica (CCC) at UAM.

\bibliographystyle{spbasic}

\end{document}